\theoremstyle{plain}
\newtheorem*{theorem*}{Theorem}
\newtheorem{theorem}{Theorem}
\newtheorem*{proposition*}{Proposition}
\newtheorem{corollary}[theorem]{Corollary}
\newtheorem*{corollary*}{Corollary}
\newtheorem{lemma}[theorem]{Lemma}
\newtheorem*{lemma*}{Lemma}
\theoremstyle{definition}
\newtheorem*{remark*}{Remark}
\newtheorem*{remarks*}{Remarks}
\newtheorem*{conjecture*}{Conjecture}
\theoremstyle{definition}
\newcommand{\C}{\mathbb{C}}
\newcommand{\B}{\mathbb{B}}
\newcommand{\E}{\mathbb{E}}
\newcommand{\eps}{\epsilon}
\newcommand{\Aut}{\operatorname{Aut}}
\newcommand{\Cov}{\operatorname{Cov}}
\newcommand{\Var}{\operatorname{Var}}
\title[Gaussian Analytic functions in the unit ball]
{Gaussian Analytic functions in the unit ball}
\author[Jeremiah Buckley] {Jeremiah Buckley}
\address{School of Mathematical Sciences, Tel Aviv University, Tel Aviv 69978, Israel}
\email{buckley@post.tau.ac.il}
\author[Xavier Massaneda] {Xavier Massaneda}
\address{Departament de Matem\`atica Aplicada i An\`alisi,
Universitat  de Bar\-ce\-lo\-na, Gran Via 585, 08071-Bar\-ce\-lo\-na, Spain}
\email{xavier.massaneda@ub.edu}
\author[Bharti Pridhnani] {Bharti Pridhnani}
\address{Departament de Matem\`atica Aplicada i An\`alisi,
Universitat  de Bar\-ce\-lo\-na, Gran Via 585, 08071-Bar\-ce\-lo\-na, Spain}
\email{bharti.pridhnani@ub.edu}
\date{\today}
\keywords{}
\subjclass{}
\begin{document}

\begin{abstract} 
We study some properties of hyperbolic Gaussian analytic functions of intensity $L$ in the unit ball of $\C^n$. First we deal with the asymptotics  of fluctuations of linear statistics as $L\to\infty$. Then we estimate the probability of large deviations (with respect to the expected value) of such linear statistics and use this estimate to prove a hole theorem.
\end{abstract}

\maketitle

\section*{Introduction}

Let $\mathbb B_n$ denote the unit ball in $\mathbb C^n$ and let $\nu$ denote the Lebesgue measure in $\mathbb C^n$ normalised so that $\nu(\B_n)=1$. Explicitly $\nu=\frac{n!}{\pi^n}dm=\beta^n$, where $dm$ is the Lebesgue measure and $\beta=\frac {i}{2\pi}\partial\bar\partial |z|^2$ is the fundamental form of the Euclidean metric.

For $L>n$ consider the weighted Bergman space
\[
 B_L(\mathbb B_n)=\bigl\{f\in H(\B_n) : \|f\|_{n,L}^2:=c_{n,L} \int_{\B_n} |f(z)|^2 (1-|z|^2)^{L}  d\mu(z)< +\infty\bigr\} ,
\]
where  
\begin{equation}\label{m-invariant}
 d\mu(z)=\frac {d\nu(z)}{(1-|z|^2)^{n+1}},
\end{equation}
and $c_{n,L}=\frac{\Gamma(L)}{n!\Gamma(L-n)}$ is chosen so that $\|1\|_{n,L}=1$. 

Let
\[
 e_{\alpha}(z)=\left(\frac{\Gamma(L+|\alpha|)}{\alpha!\Gamma(L)}\right)^{1/2} z^\alpha
\]
denote the normalisation of the monomial $z^\alpha$ in the norm $\|\cdot\|_{n,L}$, so that $\{e_\alpha\}_\alpha$ is an orthonormal basis of $B_L(\mathbb B_n)$. As usual, here we denote $z=(z_1,\dots,z_n)$ and use the multi-index notation $\alpha=(\alpha_1,\dots,\alpha_n)$, $\alpha !=\alpha_1!\cdots \alpha_n!$, $|\alpha|=|\alpha_1|+\cdots +|\alpha_n|$ and $z^\alpha=z_1^{\alpha_1}\cdots z_n^{\alpha_n}$.

The \emph{hyperbolic Gaussian analytic function} (GAF) of \emph{intensity} $L$ is defined as
\[
 f_L(z)=\sum_\alpha a_\alpha  \left(\frac{\Gamma(L+|\alpha|)}{\alpha!\Gamma(L)}\right)^{1/2} z^\alpha \qquad z\in\B_n,
\]
where $a_\alpha $ are i.i.d. complex Gaussians of mean 0 and variance 1 ($a_\alpha \sim N_{\C}(0,1)$).

We choose the orthonormal basis $\{e_{\alpha}\}_\alpha$ for convenience, but any other basis would produce the same covariance kernel (see below) and therefore the same results.

The sum defining $f_L$ can be analytically continued to $L>0$, which we assume henceforth.

The characteristics of the hyperbolic GAF are determined by its covariance kernel, which is given by (see \cite{ST1}*{Section 1}, \cite{Stoll}*{p.17-18})
\begin{align*}
 K_L(z,w)&=\E[f_L(z)\overline{f_L(w)}]= \sum_\alpha \frac{\Gamma(L+|\alpha|)}{\alpha!\Gamma(L)} z^\alpha \bar w^\alpha=
 \sum_{m=0}^\infty   \frac{\Gamma(L+m)}{\Gamma(L)} \sum_{\alpha : |\alpha|=m} \frac 1{\alpha!} z^\alpha \bar w^\alpha\\
 &=
 \sum_{m=0}^\infty \frac{\Gamma(L+m)}{m! \Gamma(L)} (z\cdot\bar w)^m=
 \frac 1{(1-z\cdot \bar w)^{L}}\ .
\end{align*}

A main feature of the hyperbolic GAF is that the distribution of its zero set 
\[
 Z_{f_L}=\left\{z\in \B_n;\ f_L(z)=0\right\} 
\]
is invariant under the group $\Aut(\B_n)$ of holomorphic automorphisms of the ball. Given $w\in\B_n$ there exists $\phi_w\in\Aut(\B_n)$ such that $\phi_w(w)=0$ and $\phi_w(0)=w$, and all automorphisms are essentially of this form: for all $\psi\in\Aut(B_n)$ there exist $w\in\B_n$ and $\mathcal U$ in the unitary group such that $\psi=\mathcal U \phi_w$ (see \cite{Ru}*{2.2.5}). Then the \emph{pseudo-hyperbolic distance} $\varrho$ in $\B_n$ is defined as
\[
 \varrho(z,w)=|\phi_w(z)|,\quad z,w\in\B_n\ ,
\]
and the corresponding pseudo-hyperbolic balls as 
\[
 E(w,r)=\{z\in\B_n : \varrho(z,w)<r\}, \qquad r<1\ .
\]
There is an immediate relation between the normalised covariance kernel and the pseudo-hyperbolic distance, given by the identity
\begin{equation}\label{dh}
 1-|\phi_w(z)|^2=\frac{(1-|z|^2) (1-|w|^2)}{|1-\bar z\cdot w|^2}\ .
\end{equation}
The transformations
\[
 T_w(f)(z)=\left(\frac{1-|w|^2}{(1-\bar w\cdot z)^2}\right)^{L/2} f(\phi_w(z))
\]
are isometries of $B_L(\B_n)$, hence the random zero sets $Z_{f_L}$ and $Z_{f_L\circ\phi_w}$ have the same distribution. More specifically, the
distribution of the (random) integration current  
\[
 [Z_{f_L}]=\frac {i}{2\pi}\partial\bar\partial\log |f_L|^2\ ,
\]
is invariant under automorphisms of the unit ball. 

The Edelman-Kostlan formula (see \cite{HKPV}*{Section 2.4} and \cite{Sod}*{Theorem 1}) gives the so-called \emph{first intensity} of the GAF:
\[
\E[Z_{f_L}] =\frac{i}{2\pi}\partial \overline{\partial}\log K_L(z,z)=L\, \omega(z)\ ,
\]
where $\omega$ is the invariant form
\[
\omega(z)=\frac{i}{2\pi}\partial \overline{\partial}\log\bigl(\frac 1{1-|z|^2}\bigr)=\frac{1}{(1-|z|^2)^2}\frac{i}{2\pi}\sum^{n}_{j,k=1}[(1-|z|^2)\delta_{j,k}+z_k\overline{z_j}]dz_j\wedge d\overline{z_k}\ .
\]

Notice that $\mu=\omega^n$ is also invariant by $\Aut(\B_n)$ \cite{Stoll}*{p.19}.

In this paper we study some statistical properties of the zero variety $Z_{f_L}$ for large values of the intensity $L$. The outline of the paper is as follows.

In Section 1 we study the fluctuations of linear statistics as the intensity $L$ tends to $\infty$. Let $\mathcal D_{(n-1,n-1)}$ denote the space of compactly supported smooth forms of bidegree $(n-1,n-1)$. For  $\varphi\in\mathcal D_{(n-1,n-1)}$, consider the integral of $\varphi$ over $Z_{f_L}$:
\[
I_L(\varphi)=\int_{Z_{f_L}}\varphi=\int_{\B_n}  \varphi  \wedge [Z_{f_L}]  .
\]
By the Edelman-Kostlan formula,
\begin{equation}\label{E}
\E[I_{L}(\varphi)]=L\int_{\B_n}\varphi\wedge \omega\ .
\end{equation}
We compute the leading term in the limit as $L\to\infty$ of $\Var [I_L(\varphi)]$ and see that the rate of self-averaging of the integral of $I_L(\varphi)$ increases with the dimension. A quantitative statement is the following.

\begin{theorem}\label{LS}
 Let $\varphi\in \mathcal D_{(n-1,n-1)}$  and let $D\varphi$ be the function defined by $\frac i{2\pi}\partial\bar\partial\varphi=D\varphi d\mu$. Then
 \[
  \Var [I_L(\varphi)]=n! \zeta(n+2)\left(\int_{\B_n} (D\varphi)^2 d\mu\right)\frac 1{L^n}+\textrm{O}\left(\frac {\log L}{L^{n+1}}\right).
 \]
\end{theorem}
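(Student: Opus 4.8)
The plan is to start from the standard formula expressing the variance of a linear statistic of a zero set in terms of the covariance kernel. Writing $\widehat K_L(z,w) = K_L(z,w)/\sqrt{K_L(z,z)K_L(w,w)}$ for the normalised kernel, the Edelman–Kostlan/Itô–Wiener machinery (as in \cite{HKPV}*{Section 3}, \cite{ST1}) gives
\[
\Var[I_L(\varphi)] = \frac{1}{4}\int_{\B_n}\int_{\B_n} \Bigl(\frac{i}{2\pi}\partial\bar\partial_z\Bigr)\Bigl(\frac{i}{2\pi}\partial\bar\partial_w\Bigr)\, G\bigl(|\widehat K_L(z,w)|^2\bigr)\ \varphi(z)\wedge\varphi(w),
\]
where $G(t)=\sum_{k\ge 1} t^k/k^2 = \Li_2(t)$ (the precise constant and the exact form of $G$ should be pinned down from the cited references). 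Using \eqref{dh}, the key simplification in the ball is that $|\widehat K_L(z,w)|^2 = (1-\varrho(z,w)^2)^{L}$, so the integrand depends on $z,w$ only through the pseudo-hyperbolic distance. After integrating by parts twice to move the $\partial\bar\partial$'s onto $\varphi$, we are left with
\[
\Var[I_L(\varphi)] = \int_{\B_n}\int_{\B_n} \Li_2\bigl((1-\varrho(z,w)^2)^{L}\bigr)\ D\varphi(z)\,D\varphi(w)\ d\mu(z)\,d\mu(w)
\]
(up to a universal constant), where $D\varphi$ is as defined in the statement; this is where the hypothesis $\frac{i}{2\pi}\partial\bar\partial\varphi = D\varphi\, d\mu$ enters, and where invariance of $\mu$ is used.

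Next I would analyse the asymptotics of the kernel $\Li_2\bigl((1-\varrho^2)^L\bigr)$ as $L\to\infty$. It is concentrated near $\varrho=0$: for fixed $z$, substituting $w=\phi_z(\zeta)$ and using invariance of $\mu$, the inner integral becomes $\int_{\B_n}\Li_2\bigl((1-|\zeta|^2)^L\bigr) D\varphi(\phi_z(\zeta))\,d\mu(\zeta)$. The mass of $\Li_2\bigl((1-|\zeta|^2)^L\bigr)\,d\mu(\zeta)$ lives on the scale $|\zeta|^2 \sim 1/L$; rescaling $|\zeta|^2 = s/L$ and using $d\mu(\zeta) = (1-|\zeta|^2)^{-(n+1)}d\nu(\zeta)$ with $d\nu$ on $\B_n$ comparable to $c_n r^{2n-1}dr$ near the origin (here $r=|\zeta|$), one gets that
\[
\int_{\B_n}\Li_2\bigl((1-|\zeta|^2)^L\bigr)\,d\mu(\zeta) = \frac{C_n}{L^n} + \mathrm{O}\Bigl(\frac{\log L}{L^{n+1}}\Bigr),
\]
and the constant $C_n$ is computed by $\int_0^\infty \Li_2(e^{-s})\,s^{n-1}\,ds$. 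Expanding $\Li_2(e^{-s})=\sum_{k\ge1} e^{-ks}/k^2$ and integrating term by term gives $\int_0^\infty \Li_2(e^{-s})s^{n-1}ds = \Gamma(n)\sum_{k\ge1} k^{-2}k^{-n} = (n-1)!\,\zeta(n+2)$. Tracking the normalisation of $d\nu$ near $0$ (which contributes the factor $n!$ once the $r^{2n-1}dr$ is matched against the $\pi^n/n!$ in $\nu$) produces the claimed $n!\,\zeta(n+2)$. Since $D\varphi$ is smooth and compactly supported, $D\varphi(\phi_z(\zeta)) = D\varphi(z) + \mathrm{O}(|\zeta|)$ on the relevant scale, so replacing $D\varphi(\phi_z(\zeta))$ by $D\varphi(z)$ costs only a factor $\mathrm{O}(L^{-1/2})$ relative to the main term inside, i.e. $\mathrm{O}(L^{-n-1/2})$ overall — one must check this is absorbed by the stated $\mathrm{O}(\log L/L^{n+1})$, which requires using the second-order Taylor term and the fact that the first-order term integrates to something smaller by symmetry, or else sharpening to an $\mathrm{O}(L^{-n-1})$ error in the substitution step. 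After this, the double integral collapses to $n!\,\zeta(n+2)\,L^{-n}\int_{\B_n}(D\varphi)^2\,d\mu + \mathrm{O}(\log L/L^{n+1})$.

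The main obstacle I anticipate is the error analysis, not the leading-order computation: controlling the replacement of $D\varphi(\phi_z(\zeta))$ by $D\varphi(z)$ with an error genuinely of size $\mathrm{O}(\log L/L^{n+1})$ rather than the naive $\mathrm{O}(L^{-n-1/2})$. This should come from a second-order Taylor expansion of $D\varphi\circ\phi_z$ together with the rotational symmetry of the weight $\Li_2((1-|\zeta|^2)^L)$ in $\zeta$, which kills the odd-order terms; the surviving quadratic term contributes at order $L^{-1}$ relative to the main term, i.e. $\mathrm{O}(L^{-n-1})$, and the $\log L$ in the error is the tail contribution from $|\zeta|$ not small (where $\Li_2((1-|\zeta|^2)^L)$ is exponentially small but $\mu$ has infinite mass, so one integrates $\Li_2\bigl((1-|\zeta|^2)^L\bigr)(1-|\zeta|^2)^{-(n+1)}$ and picks up a $\log$ from the borderline term $k/k^2$ summing against $(1-|\zeta|^2)^{L k - (n+1)}$ near the boundary). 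A secondary technical point is justifying the integration by parts and the convergence of the $\Li_2$ series representation of the variance uniformly enough to differentiate and integrate; since $|\widehat K_L|^2 < 1$ off the diagonal this is routine but should be stated. Finally, I would double-check the overall multiplicative constant against the known one-dimensional planar and hyperbolic cases in \cite{ST1}, \cite{HKPV} to make sure the $n!$ and the shift $\zeta(n+2)$ (rather than $\zeta(n+1)$ or $\zeta(2)$) are correctly placed.
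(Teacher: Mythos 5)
Your overall strategy --- the bi-potential formula
$\Var[I_L(\varphi)] = \int\!\!\int \rho_L(z,w)\, D\varphi(z)\,D\varphi(w)\,d\mu(z)\,d\mu(w)$
with $\rho_L = \mathrm{Li}_2(|\theta_L|^2)$ (the constant is $1$, not ``up to a universal constant''), change of variables $w=\phi_z(\zeta)$ by $\mathrm{M\ddot{o}bius}$ invariance of $\mu$, and the identification of the constant from $\sum_m m^{-2}\Gamma(n)/(mL)^n = (n-1)!\,\zeta(n+2)L^{-n}$ --- is the same as the paper's. Where you diverge is the error analysis, and your instinct there is sound. The paper splits the double integral at the threshold $\rho_L = \varepsilon_L := 2L^{-(n+1)}$; the off-diagonal error with $D\varphi(z)-D\varphi(w)$ is then bounded by pulling the modulus of continuity $\eta(1-|\phi_z(w)|^2)$ out of the integral at its extremal value, and the paper invokes $\eta(t)\lesssim |1-t|$ near $t=1$. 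Since $1-|\phi_z(w)|^2=t$ corresponds to pseudo-hyperbolic distance $\varrho(z,w)=\sqrt{1-t}$, that inequality amounts to $|D\varphi(z)-D\varphi(w)|\lesssim\varrho(z,w)^2$, which a generic smooth $\varphi$ does not satisfy --- Lipschitz only gives $\eta(t)\lesssim\sqrt{1-t}$, i.e.\ exactly the $\mathrm{O}(L^{-n-1/2})$ obstacle you flagged. Your proposed fix --- second-order Taylor expansion of $D\varphi\circ\phi_w$ at $\zeta=0$ together with the rotational symmetry of the weight $\mathrm{Li}_2((1-|\zeta|^2)^L)\,d\mu(\zeta)$, which kills the linear term --- is precisely what is needed to reach $\mathrm{O}(L^{-n-1})$, and is in my view the correct way to justify the stated error bound; the paper's shortcut is not, as written. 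Two minor corrections to your sketch: the $\log L$ in the error arises not from a $\mu$-tail near the boundary but from the paper's choice of cutoff, since $\rho_L>\varepsilon_L$ corresponds to $\varrho(z,w)^2\lesssim 1-(\varepsilon_L/2)^{1/L}\simeq (n+1)\log L/L$ rather than $1/L$; and the paper evaluates the main term via a truncated incomplete beta integral and Stirling rather than the rescaling $|\zeta|^2=s/L$, though both yield $n!\,\zeta(n+2)L^{-n}$.
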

Notice that this shows a strong form of self-averaging of the volume $I_L(\varphi)$, in the sense that
\[
 \frac{\Var I_L(\varphi)}{(\E[I_L(\varphi)])^2}=\textrm O\left(\frac 1{L^{n+2}}\right).
\]
Notice also that the self-averaging increases with the dimension.

The same computations involved in the proof of the this theorem show the asymptotic normality of $I_L(\varphi)$, i.e., that the distributions of 
\[
\frac{I_L(\varphi)-\E[I_L(\varphi)]}{\sqrt{\Var [I_L(\varphi)]}}
\]
converge weakly to the (real) standard gaussian (Corollary~\ref{normality}).

The proofs are rather straight-forward generalisations of the proof for the one-dimensional case given by Sodin and Tsirelson \cite{ST1}, or the analogous result in the context of compact manifolds given by Shiffman and Zelditch.

Let $p_N$ be a Gaussian holomorphic polynomial in $\C\mathbb P^n$ or, more generally, a section of a power $L^N$ of a positive Hermitian line bundle $L$ over an $n$-dimensional K\"ahler manifold $M$. Given a test form $\varphi$  of bidegree $(n-1,n-1)$, define
\[
 I_{N}(\varphi)=\int_{Z_{p_N}}\varphi=\int_M\varphi\wedge [Z_{p_N}]\ .
\]
According to \cite{SZ10}*{Theorem 1}, as $N\to\infty$,
\[
 \Var [I_{N}(\varphi)]=\frac{\pi^{n-2}}{4} \zeta(n+2)  \|\partial\bar\partial \varphi\|^2_{L^2}\, \frac 1{N^n} +\textrm{O}(\frac 1{N^{n+1/2-\epsilon}})\ .
\]
The proof of this result is based on a bi-potential expression of $\Var [I_{N}(\varphi)]$ (see \eqref{bipotential}) together with good estimates of the covariance kernel, something we certainly have for the GAF in the ball.

In Section 2, we deal with large deviations. We study the probability that the deviation of $I_L(\varphi)$ from its expected value is at least a fixed proportion of $\E[I_L(\varphi)]$.

\begin{theorem}\label{smoothlargedeviations}
For all $\varphi\in \mathcal D_{(n-1,n-1)}$ and $\delta>0$, there exist $c>0$ and $L_0(\varphi,\delta,n)$ such that for all $L\geq L_0$,
\[
\mathbb P\left[\left\{\omega:\ |I_L(\varphi)-\E(I_L(\varphi))|>\delta \E(I_L(\varphi))\right\}\right]\leq e^{-cL^{n+1}}.
\]
\end{theorem}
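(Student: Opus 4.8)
The plan is to rewrite $I_L(\varphi)-\E[I_L(\varphi)]$ as a linear statistic of $\log|f_L|$ against a fixed mean-zero weight, and then to estimate its tails by a Cramér-type argument over a decomposition of $\operatorname{supp}\varphi$ into $\asymp L^{n}$ cells on which $f_L$ is essentially decorrelated. Concretely, integration by parts (duality of currents, valid since $\log|f_L|^{2}$ is plurisubharmonic with locally integrable singularities and $[Z_{f_L}]$ is a well-defined closed positive current) gives $I_L(\varphi)=\int_{\B_n}(\log|f_L|^{2})\,\psi\,d\mu$, where $\psi:=D\varphi$ is smooth, supported in a fixed compact $K\Subset\B_n$, and has $\int_{\B_n}\psi\,d\mu=\int_{\B_n}\tfrac i{2\pi}\partial\bar\partial\varphi=0$. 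The same identity applied to the Edelman--Kostlan formula, together with $\log K_L(z,z)=L\log\tfrac1{1-|z|^{2}}$, yields $\E[I_L(\varphi)]=\int_{\B_n}\log K_L(z,z)\,\psi\,d\mu=L\int_{\B_n}\varphi\wedge\omega$, whence
\[
I_L(\varphi)-\E[I_L(\varphi)]=\int_{\B_n}(\log|g_L(z)|^{2})\,\psi(z)\,d\mu(z),\qquad g_L:=f_L\big/\sqrt{K_L(\cdot,\cdot)}\ ,
\]
the normalised GAF, for which $g_L(z)\sim N_{\C}(0,1)$ for every $z$ and $\bigl|\E[g_L(z)\overline{g_L(w)}]\bigr|=(1-\varrho(z,w)^{2})^{L/2}$ by \eqref{dh}, so $g_L$ is decorrelated once $\varrho(z,w)\gtrsim L^{-1/2}$. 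Replacing $\varphi$ by $-\varphi$ if needed we may assume $c_\varphi:=\int_{\B_n}\varphi\wedge\omega>0$, and then it suffices to bound $\mathbb P\bigl[\,\bigl|\int_{K}(\log|g_L|^{2})\,\psi\,d\mu\bigr|>\delta c_\varphi L\,\bigr]$.

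I would cover $K$ by pseudo-hyperbolic balls $E_j=E(w_j,r_0L^{-1/2})$, $j=1,\dots,N$, with $r_0$ a fixed small constant and bounded overlap; since $d\mu\asymp d\nu$ on $K$ this gives $\mu(E_j)\asymp L^{-n}$ and $N\asymp L^{n}$. With $X_j:=\int_{E_j}\bigl|\log|g_L|^{2}\bigr|\,d\mu$ one has $\bigl|\int_{K}(\log|g_L|^{2})\psi\,d\mu\bigr|\lesssim\|\psi\|_\infty\sum_{j}X_j$, so it is enough to prove $\mathbb P[\sum_j X_j>\eta L]\le e^{-cL^{n+1}}$ for a fixed $\eta=\eta(\varphi,\delta)>0$. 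The core step is a uniform per-cell bound $\E[e^{s_0L^{n}X_j}]\le C$ for a fixed $s_0>0$ (equivalently $\mathbb P[X_j>t]\le Ae^{-BL^{n}t}$ for $t\ge t_0L^{-n}$). I would obtain it by rescaling $E_j$ to unit pseudo-hyperbolic size via $\phi_{w_j}$ followed by $z\mapsto\sqrt L\,z$: the covariance of the pulled-back field converges, with rate $O(1/L)$ and uniform control of derivatives, to that of the flat GAF on $\C^n$ (kernel $e^{z\cdot\bar w}$), so that $L^{n}X_j\asymp\mu(E_j)^{-1}\int_{E_j}\bigl|\log|g_L|^{2}\bigr|\,d\mu$ behaves like the corresponding average for a fixed GAF on a fixed ball. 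The positive part is harmless: $\mu(E_j)^{-1}\int_{E_j}(\log|g_L|^{2})_{+}\,d\mu\le 2(\log\sup_{E_j}|g_L|)_{+}$, and $\sup_{E_j}|g_L|$ has a Gaussian tail by the Borell--TIS inequality, giving a doubly-exponential bound. The negative part is where the work is; I would control it through the Riesz/Jensen decomposition $\log|f_L|=(\text{Green potential of }Z_{f_L})+(\text{pluriharmonic part})$ on a slightly larger cell: the potential contributes at most a constant times the mass of $Z_{f_L}$ there, which has a super-exponential tail, and the pluriharmonic part is bounded in $L^{1}$ by $\log$ of the supremum and the infimum of $|f_L|$ on the boundary, with an exponential tail. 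Carrying this tail control uniformly in $L$ through the rescaling limit is the \emph{main technical obstacle}.

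Finally, the $X_j$ are not independent, but since the cells are mutually $\gtrsim L^{-1/2}$ apart the cross-correlations of $g_L$ stay uniformly below $1$; following Sodin--Tsirelson I would decouple by writing $f_L=g+h$, where $g$ has independent restrictions to the cells and $\|h\|_{L^{\infty}(K)}$ is negligible off an event $\mathcal E$ with $\mathbb P[\mathcal E]\le e^{-L^{n+2}}$ (using the geometric decay of the $e_\alpha$ on $K$ to discard high-degree Taylor modes). On $\mathcal E^{c}$ this gives $X_j\le Y_j+o(L^{-n})$ with $Y_j$ i.i.d.\ satisfying $\E[e^{s_0L^{n}Y_j}]\le C$, and then Cramér's inequality yields
\[
\mathbb P\Bigl[\sum_{j=1}^{N}Y_j>\tfrac{\eta}{2}L\Bigr]\le e^{-s_0L^{n}\cdot\eta L/2}\prod_{j=1}^{N}\E[e^{s_0L^{n}Y_j}]\le e^{-\frac{s_0\eta}{2}L^{n+1}+(\log C)N}\le e^{-cL^{n+1}}
\]
for $L\ge L_0(\varphi,\delta,n)$ large, with $c=\tfrac{s_0\eta}{4}$; the contribution $\mathbb P[\mathcal E]\le e^{-L^{n+2}}$ is absorbed. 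The delicate point here is to organise the decoupling so that essentially all $\asymp L^{n}$ cells remain usable — a logarithmic loss in their number would already drop the exponent below $L^{n+1}$.
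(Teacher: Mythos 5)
Your route is genuinely different from the paper's. The paper covers $K$ by a \emph{fixed}, $L$-independent number $N\simeq\mu(K)/\eta$ of pseudo-hyperbolic balls of \emph{fixed} invariant volume $\eta$ (with $\eta$ chosen so that $\mu(K)\eta^{1/n}\simeq\delta$), and the whole weight of the argument is carried by a single-cell estimate (Lemmas~\ref{controlmean} and \ref{controlmax}):
\[
\mathbb P\Bigl[\max_{E(z_0,r)}\log|\hat f_L|^2<-\delta L\Bigr]\leq e^{-cL^{n+1}}\ .
\]
This is proved via Cauchy's formula: the hypothesised deficiency on a fixed sphere $\{|z|=r\}$ forces each of the $\sim L$ independent Taylor levels $\sum_{|\alpha|=m}|a_\alpha|^2$, for $m$ in an interval of length $\simeq L$, to be exponentially small; each such event is a lower tail of a $\chi^2$ with $\simeq L^{n-1}$ degrees of freedom below an exponentially small threshold and hence has probability $\leq e^{-cL^n}$, and independence across levels gives $e^{-cL^{n+1}}$. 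Thus the $L^{n+1}$ exponent comes from $L$ independent coefficient levels each costing $e^{-cL^n}$, the union bound over the $N=O(1)$ cells is free, and \emph{no decoupling of cells is needed at all}.

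By contrast you want $\sim L^n$ cells of microscopic pseudo-hyperbolic radius $\sim L^{-1/2}$, a per-cell exponential moment at scale $L^n$, and a Cramér product. Two pieces are missing, and the second looks unfixable as stated. (1) The per-cell bound $\E[e^{s_0L^nX_j}]\le C$, which you flag yourself as the main obstacle — this is a real lemma, comparable in difficulty to the paper's Lemma~\ref{controlmax}, and is nowhere established. (2) The decoupling $f_L=g+h$ with $g$ having \emph{independent} restrictions to the $\sim L^n$ cells. The normalised covariance is $|\theta_L(z,w)|=(1-\varrho(z,w)^2)^{L/2}$, so for adjacent cells at distance $\sim r_0L^{-1/2}$ the correlation stabilises to the fixed constant $e^{-r_0^2/2}$ — it is not small. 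And regardless of separation, discarding high-degree Taylor modes (which is what the ``geometric decay of $e_\alpha$ on $K$'' gives) leaves a single holomorphic function of $\sim L^n$ Gaussian coefficients whose restrictions to the $\sim L^n$ cells remain fully coupled; there is no construction of a holomorphic $g$ with genuinely independent cell restrictions plus a uniformly small $h$. Without independence the Cramér product $\prod_j\E[e^{s_0L^nY_j}]$ has no justification, and the final exponent does not follow. If you want to keep your cell decomposition you would have to replace decoupling by a concrete dependence-tolerant tail bound (e.g.\ a Gaussian-chaos or cumulant argument); otherwise the paper's fixed-cell plus Taylor-level strategy is the cleaner path, since it sidesteps independence of cells entirely.
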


Replacing $\delta\int_{\B_n}\varphi\wedge \omega$ by $\delta$ we get the equivalent formulation:
\[
\mathbb P\left[\left|\frac{1}{L}I_L(\varphi)-\int_{\B_n}\varphi\wedge\ \omega\right|>\delta \right]\leq e^{-cL^{n+1}}.
\]
Following the scheme of \cite{SZZ}*{pag.1994} we deduce a corollary that implies the upper bound in the hole theorem (Theorem~\ref{holethm} below). For a compactly supported function $\psi$ in $\B_n$ denote 
\[
 I_L(\psi)=\int_{Z_{f_L}}\psi\omega^{n-1} =\int_{\mathbb B_n} \psi \wedge \omega^{n-1}\wedge [Z_{f_L}]\ .
\]
Notice that \eqref{E} gives here
\[
 \E[I_L(\psi)]=L\int_{\B_n} \psi\, d\mu\ .
\]

In particular, and for an open set $U$ in the ball let $\chi_U$ denote its characteristic function and let  $I_L(U)=I_L(\chi_U)$. Then $\E[I_L(U)]=L\mu(U)$.

\begin{corollary}\label{conseqSLD}
Suppose that $U$ is an open set contained in a compact subset of $\B_n$. For all $\delta>0$ there exist $c>0$ and $L_0$ such that for all $L\geq L_0$,
\[
\mathbb P\left[\left|\frac{1}{L}I_L(U)-\mu(U)\right|>\delta\right]\leq e^{-cL^{n+1}}.
\]
\end{corollary}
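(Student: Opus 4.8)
The plan is to derive Corollary~\ref{conseqSLD} from Theorem~\ref{smoothlargedeviations} by squeezing $\chi_U$ between smooth compactly supported functions and using the positivity of the zero current, in the spirit of \cite{SZZ}*{p.~1994}. Fix $\delta>0$ (we may assume $U\neq\emptyset$, otherwise there is nothing to prove). Since $U$ is contained in a compact subset of $\B_n$, the invariant measure $\mu$ is finite in a neighbourhood of $\overline U$, and one can choose $\psi_-,\psi_+\in C^\infty_c(\B_n)$ with $0\le\psi_-\le\chi_U\le\psi_+\le 1$, with $\psi_-$ supported in $U$ and $\psi_+$ supported in a small enough neighbourhood of $\overline U$, such that
\[
 \int_{\B_n}(\psi_+-\psi_-)\,d\mu<\frac{\delta}{3}\,;
\]
in particular, writing $m_\pm=\int_{\B_n}\psi_\pm\,d\mu$, we have $|m_\pm-\mu(U)|<\delta/3$.

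Next I would use that $\omega^{n-1}\wedge[Z_{f_L}]$ is a positive measure on $\B_n$ — the current $[Z_{f_L}]$ is a positive closed $(1,1)$-current and $\omega^{n-1}$ a positive $(n-1,n-1)$-form — so that the functional $\psi\mapsto I_L(\psi)=\int_{\B_n}\psi\,\omega^{n-1}\wedge[Z_{f_L}]$ is monotone. Hence, pathwise, $I_L(\psi_-)\le I_L(U)\le I_L(\psi_+)$, and therefore
\[
 \Bigl\{\bigl|\tfrac1L I_L(U)-\mu(U)\bigr|>\delta\Bigr\}\subset A_+\cup A_-,\qquad A_\pm:=\Bigl\{\pm\bigl(\tfrac1L I_L(\psi_\pm)-m_\pm\bigr)>\tfrac{\delta}{3}\Bigr\},
\]
where for the lower event I also use $I_L(U)\ge 0$: if $\mu(U)\le\delta$ then $A_-$ and the event on the left are empty, and otherwise $m_-\ge\mu(U)-\delta/3>0$ (and $m_+\ge\mu(U)>0$ always).

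Finally I would apply Theorem~\ref{smoothlargedeviations} to the test forms $\varphi_\pm=\psi_\pm\,\omega^{n-1}\in\mathcal D_{(n-1,n-1)}$, noting that the two meanings of $I_L$ agree on $\varphi_\pm$, namely $I_L(\varphi_\pm)=I_L(\psi_\pm)$, and that, since $\mu=\omega^n$, $\E[I_L(\varphi_\pm)]=L\int_{\B_n}\varphi_\pm\wedge\omega=Lm_\pm$. With the proportions $\delta_\pm:=\delta/(3m_\pm)>0$ the theorem yields $c_\pm>0$ and $L_\pm$ such that, for $L\ge L_\pm$,
\[
 \mathbb P[A_\pm]\le\mathbb P\Bigl[\bigl|\tfrac1L I_L(\psi_\pm)-m_\pm\bigr|>\delta_\pm m_\pm\Bigr]=\mathbb P\Bigl[\bigl|I_L(\varphi_\pm)-\E[I_L(\varphi_\pm)]\bigr|>\delta_\pm\E[I_L(\varphi_\pm)]\Bigr]\le e^{-c_\pm L^{n+1}}.
\]
Choosing $0<c<\min(c_+,c_-)$ and $L_0\ge\max(L_+,L_-)$ large enough that $2e^{-\min(c_+,c_-)L^{n+1}}\le e^{-cL^{n+1}}$ for $L\ge L_0$, the union bound gives the stated estimate.

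The only genuinely delicate step is the first one: approximating $\chi_U$ in $L^1(\mu)$ from above and below by smooth compactly supported functions with error below $\delta/3$. This is where relative compactness of $U$ is used — it keeps $\mu$ finite in the region of interest — and it is clean provided the $\mu$-measure of $\partial U$ is negligible, which is automatic for the pseudo-hyperbolic balls to which the corollary is applied in the hole theorem. Everything after that reduction is just monotonicity of $I_L(\cdot)$ and a union bound over the two one-sided deviations.
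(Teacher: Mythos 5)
Your proposal is correct and follows essentially the same route as the paper: squeeze $\chi_U$ between compactly supported smooth functions $\psi_-\le\chi_U\le\psi_+$, use the positivity of $\omega^{n-1}\wedge[Z_{f_L}]$ to get monotonicity of $I_L(\cdot)$, and apply Theorem~\ref{smoothlargedeviations} to $\psi_\pm\,\omega^{n-1}$ (the paper phrases the approximation multiplicatively, $(1\pm\delta)\mu(U)$, rather than additively, but this is only bookkeeping). Your remark that the $L^1(\mu)$ squeeze requires $\mu(\partial U)$ to be small is a correct observation and is tacitly assumed in the paper's version as well; it holds for the pseudo-hyperbolic balls used in the hole theorem.
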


The case $n=1$ of Theorem~\ref{smoothlargedeviations} is given in \cite{Bu-T13}*{Theorem 5.7}. Our proof is inspired by the methods of B. Shiffman, S. Zelditch and S. Zrebiec for the study of the analogous problem for compact K\"ahler manifolds. According to \cite{SZZ}*{Theorem 1.5}, given $\delta>0$, and letting $\omega$ denote the K\"ahler form of the manifold, 
\[
\mathbb P\left[ \left| \frac 1N \int_{Z_{p_N}}\varphi-\frac 1{\pi}\int_M  \omega\wedge \varphi\right| > \delta\right]\leq e^{-c N^{n+1}},
\]
where here $N$ indicates here the power of the positive Hermitian bundle over $M$.

In the last Section we study the probability that $Z_{f_L}$ has a pseudohyperbolic hole of radius $r$. By the invariance by automorphisms of the distribution of the zero variety, this is the same as studying the probability that $Z_{f_L}\cap B(0,r)=\emptyset$. 

\begin{theorem}\label{holethm}
Let $r\in (0,1)$ be fixed. There exist $C_1=C_1(n,r)>0$, $C_2=C_2(n,r)>0$ and $L_0$ such that for all 
$L\geq L_0$,
\[
e^{-C_{1}L^{n+1}} \leq \mathbb P\left[Z_{f_L}\cap B(0,r)=\emptyset\right]\leq e^{-C_{2}L^{n+1}}.
\]
\end{theorem}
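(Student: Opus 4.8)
The plan is to prove the two bounds separately, since they require genuinely different ideas. For the \textbf{lower bound}, the strategy is to exhibit an event of probability at least $e^{-C_1 L^{n+1}}$ on which $f_L$ has no zeros in $B(0,r)$. The natural choice is to force the constant term $a_0$ to dominate every other term on the closed ball $\overline{B(0,r)}$. Write $f_L = a_0 + g_L$ where $g_L(z) = \sum_{|\alpha|\geq 1} a_\alpha e_\alpha(z)$. On the event that $|a_0| \geq t$ for a suitable constant $t = t(r)$ and simultaneously $\sup_{\overline{B(0,r)}} |g_L| < t$, Rouché's theorem (or simply the triangle inequality) gives $f_L \neq 0$ on $B(0,r)$. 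The event $\{|a_0| \geq t\}$ has probability bounded below by a constant independent of $L$. For the second event, one estimates $\E[\sup_{\overline{B(0,r)}} |g_L|]$: by the reproducing property and the decay of $e_\alpha(z)$ for $|z| \leq r < 1$, the variance $\sum_{|\alpha|\geq 1}|e_\alpha(z)|^2 = K_L(z,z) - 1 = (1-|z|^2)^{-L} - 1$, which is at most $(1-r^2)^{-L}$; combined with a standard Gaussian supremum estimate and a covering argument this is exponentially large in $L$, but that is fine — what we need is that the probability that the supremum exceeds a \emph{fixed} threshold $t$ is at least $e^{-C_1 L^{n+1}}$, and this follows from a Gaussian small-ball / anti-concentration bound, or more simply by conditioning on an appropriate sub-Gaussian tail. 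The cleanest route is: $\mathbb P[\sup_{\overline{B(0,r)}}|g_L| < t]$ is bounded below using that $g_L$ is a Gaussian field whose covariance is controlled, so its supremum is sub-Gaussian with a parameter exponential in $L$; a direct small-deviation estimate then yields the $e^{-C_1 L^{n+1}}$ lower bound.

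For the \textbf{upper bound}, the plan is to invoke Corollary~\ref{conseqSLD}. If $Z_{f_L} \cap B(0,r) = \emptyset$ then, choosing an open set $U$ with $\overline{U} \subset B(0,r)$ and $\mu(U) > 0$, we have $I_L(U) = 0$, so in particular $|\frac{1}{L}I_L(U) - \mu(U)| = \mu(U) > 0$. Taking $\delta = \mu(U)/2$ (or any $\delta < \mu(U)$) in Corollary~\ref{conseqSLD} immediately gives
\[
\mathbb P[Z_{f_L}\cap B(0,r)=\emptyset] \leq \mathbb P\left[\left|\tfrac1L I_L(U)-\mu(U)\right| > \delta\right] \leq e^{-C_2 L^{n+1}}
\]
for $L$ large, with $C_2 = c(n,\delta)$. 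This half is essentially immediate once the large-deviation estimate is in hand.

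I expect the \textbf{main obstacle} to be the lower bound, specifically getting the exponent $L^{n+1}$ (and not, say, $L^n$ or $L \log L$) out of the small-ball estimate for $\sup_{\overline{B(0,r)}}|g_L|$. The issue is that $g_L$ has many ($\sim$ all multi-indices) contributing coefficients, and one must show the probability that \emph{all} the relevant randomness conspires to keep $|g_L|$ below a fixed constant decays no faster than $e^{-C_1 L^{n+1}}$. The efficient approach is not to control every $a_\alpha$ individually but to use the structure: decompose $g_L = \sum_{m\geq 1} g_L^{(m)}$ into homogeneous pieces of degree $m$, note $\sup_{\overline{B(0,r)}} |g_L^{(m)}| \leq r^m \sup_{\overline{B(0,1)}}|g_L^{(m)}|$, and bound $\E[\sup_{\partial \B_n}|g_L^{(m)}|^2]$ in terms of $\frac{\Gamma(L+m)}{m!\Gamma(L)} \cdot \binom{m+n-1}{n-1}$ (the number of monomials of degree $m$ times the coefficient variance). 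Summing the resulting geometric-type series in $m$ with the $r^m$ factor shows $\E[\sup|g_L|]$ grows only polynomially in $L$ (times a constant depending on $r$), and hence a Gaussian concentration inequality (Borell--TIS) gives $\mathbb P[\sup_{\overline{B(0,r)}}|g_L| < t] \geq$ a quantity like $\Phi(-(\E[\sup|g_L|]-t)/\sigma_L)$ where $\sigma_L^2 = \sup_{\overline{B(0,r)}} K_L(z,z) \lesssim (1-r^2)^{-L}$; since $\log(1/\Phi(-x)) \sim x^2/2$ and $x \asymp \sigma_L \asymp (1-r^2)^{-L/2}$, this would give $e^{-C(1-r^2)^{-L}}$, which is much worse than $e^{-C_1 L^{n+1}}$. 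So the naive concentration bound is too lossy, and the real work is to replace it with a sharper small-ball estimate that exploits that only $O(L^{n+1})$-worth of "effective degrees of freedom" (degrees $m \lesssim L$, with $\lesssim m^{n-1} \lesssim L^{n-1}$ coefficients each, and variance $\lesssim$ polynomial, totalling $\sim L^n$ — and then an extra factor $L$ from the tail scale) actually matter for $\sup_{\overline{B(0,r)}}|g_L|$; truncating the sum at degree $m \asymp L$ and bounding the probability that each of the $O(L^n)$ relevant Gaussian coefficients lies in a ball of fixed radius, together with an estimate that the tail $m \gtrsim L$ is negligible with overwhelming probability, produces the correct exponent $L^{n+1}$. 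Carefully choosing the truncation level and balancing these two contributions is the delicate point of the argument.
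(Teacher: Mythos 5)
Your upper bound is correct and is essentially the paper's argument: apply Corollary~\ref{conseqSLD} with $U = B(0,r)$ (or any open set compactly contained in it) and a suitably small $\delta$, using that a hole forces $I_L(U)=0$ while $\E[I_L(U)]=L\mu(U)$.

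Your lower bound starts with the right decomposition (constant term, middle range, tail) and you correctly identify that the naive Borell--TIS route is far too lossy. But the crux of your sketch --- ``bounding the probability that each of the $O(L^n)$ relevant Gaussian coefficients lies in a ball of \emph{fixed} radius, together with an estimate that the tail $m\gtrsim L$ is negligible, produces the correct exponent $L^{n+1}$'' --- is wrong on two counts, and this is precisely where the paper does real work. First, a fixed-radius constraint on $O(L^n)$ independent Gaussians has log-probability $\asymp -L^n$, not $-L^{n+1}$, and such a lower bound would contradict the upper bound you just proved. Second, a fixed-radius constraint does not even suffice to control $\sup_{|z|\le r}|g_L|$: on $|z|=r$ the field has pointwise variance $K_L(z,z)-1\asymp(1-r^2)^{-L}$, so even with $|a_\alpha|<\epsilon$ for a fixed small $\epsilon$ the middle sum is still of size $\epsilon(1-r^2)^{-L/2}$ times polynomial factors, which blows up. (Your earlier assertion that $\E[\sup_{\overline{B(0,r)}}|g_L|]$ grows only polynomially in $L$ is also incorrect for the same reason --- it grows like $(1-r^2)^{-L/2}$ up to polynomial corrections.) The paper's event
\[
E_3=\Bigl\{\,|a_\alpha|^2<\tfrac{1}{16CL}\,\tfrac{|\alpha|!\Gamma(n)}{\Gamma(n+|\alpha|)}\,(1-r^2)^{L}\ \ \forall\,\alpha:\ 0<|\alpha|\le CL\,\Bigr\}
\]
confines each of the $\sim(CL)^n$ middle coefficients to a ball of radius comparable to $(1-r^2)^{L/2}$ (up to polynomial factors), which is exactly what is needed to make the middle sum at most a fixed constant. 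Since $\mathbb P[|a_\alpha|^2<x]\approx x$ for small $x$, each such constraint costs $\log$-probability $\approx L\log(1-r^2)$, and multiplying by $\sim L^n$ coefficients produces the $-L^{n+1}\log\frac{1}{1-r^2}$ exponent. The extra factor of $L$ therefore comes from the exponentially shrinking ball radius forced by the exponential growth of the covariance kernel near $|z|=r$, not from the truncation level or from a ``tail scale.'' Once this is fixed, the tail event $E_2$ and the constant-term event $E_1$ are cheap (constant probability), matching the paper.
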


This result is inspired again by an analogue for entire functions in the plane given by Sodin and Tsirelson \cite{ST3}. Let
\[
 \mathcal F_L=\{f\in H(\C) : \int_{\C} |f(z)|^2 e^{-L|z|^2} dm(z)<+\infty\}
\]
and consider the Gaussian entire function
\[
 f_L(z)=\sum_{k=0}^\infty a_k e_k(z)\ ,
\]
where $a_k$ are i.i.d. complex standard Gaussians and $\{e_k(z)\}_{k=0}^\infty$ is an orthonormal basis of $\mathcal F_L$. The Edelman-Kostlan formula gives $\mathbb E[Z_{f_L}]=\dfrac{L}{\pi}\, dm(z)$, and for a test function $\varphi$,
\[
 I_L(\varphi)=L\int_{\mathbb C} \varphi(z)\; \frac{dm(z)}{\pi}=\int_{\mathbb C} \varphi(w/\sqrt L)\; \frac{dm(z)}{\pi}\ .
\]
In particular
\[
 \mathbb E[\#(Z_{f_L}\cap D(0,r))]= \mathbb E[\#(Z_{f_1}\cap D(0,r\sqrt L))],
\]
and therefore studying the asymptotics as $L\to \infty$ is equivalent to replacing $L$ by $r^2$ and letting $r\to\infty$.

Sodin and Tsirelson proved \cite{ST3}*{Theorem 1} that, as $r\to\infty$,
\[
 e^{-Cr^4}\leq \mathbb P[ Z_{f_1}\cap D(0,r)=\emptyset]\leq e^{-cr^4}.
\]
Zrebiec extended this result to $\C^n$ \cite{Zr}*{Theorem 1.2}, showing that the decay rate is then $e^{-Cr^{2n+2}}$, which matches with our Theorem~\ref{holethm}.

Shiffman, Zelditch and Zrebiec proved also a hole theorem for sections of powers  of a positive Hermitian line bundle over a compact K\"ahler manifold \cite{SZZ}*{Theorem 1.4}. In that case the decay rate of the hole probability is again $e^{-C N^{n+1}}$.

A final word about notation. By $A\lesssim B$ we mean that there exists $C>0$ independent of the relevant variables of $A$ and $B$ for which $A\leq CB$. Then $A\simeq B$ means that $A\lesssim B$ and $B\lesssim A$.

\section{Linear statistics}

\begin{proof}[Proof of Theorem~\ref{LS}]
The proof is as in \cite{HKPV}*{Section 3.5}, so we keep it short. By Stokes and Fubini's theorems
 \begin{align*}
  \Var [I_L(\varphi)]&=\E\left[| I_L(\varphi)-\E(I_L(\varphi))|^2\right]=\E\left[ \left| \int_{\B_n} \varphi\wedge \frac i{2\pi}\partial\bar\partial \log\bigl(\frac{|f_L|^2}{K_L(z,z)}\bigr)\right|^2 \right]\\
  &=4\E\left[\left| \int_{\B_n}   \log\bigl(\frac{|f_L|}{\sqrt{K_L(z,z)}}\bigr) \frac i{2\pi}\partial\bar\partial\varphi \right|^2 \right]\\
  &=4 \int_{\B_n} \int_{\B_n} \E\left[ \log\bigl(\frac{|f_L(z)|}{\sqrt{K_L(z,z)}}\bigr) \log\bigl(\frac{|f_L(w)|}{\sqrt{K_L(w,w)}}\bigr)\right] \frac i{2\pi}\partial\bar\partial\varphi(z) \frac i{2\pi}\partial\bar\partial\varphi (w)\ .
 \end{align*}
 
Consider the normalised GAF 
\[
 \hat f(z)=\frac{f_L(z)}{\sqrt{K_L(z,z)}}\ .
\]
Then $(\hat f(z),\hat f(w))$ has joint gaussian distribution with mean 0 and marginal variances 0. Since $\hat f(z)\sim N_{\C}(0,1)$ the expectation $\E(\log|\hat f(z)|)$ is constant, and integrated against $\partial\bar\partial \varphi$ gives 0. Therefore, in the integral above, the expectation can be replaced by 
 \[
  \Cov (\log|\hat f(z)|,\log|\hat f(w)|)=
  \E[\log|\hat f(z)|\log|\hat f(w)|]-\E[\log|\hat f(z)|] \E[\log|\hat f(w)|]\ .
 \]
This yields the following bi-potential expression of the variance, which is our starting point:
\begin{align}\label{bipotential}
 \Var[I_L(\varphi)]&=
 \int_{\B_n}\int_{\B_n}  \rho_L(z,w) \frac i{2\pi}\partial\bar\partial\varphi(z) \frac i{2\pi}\partial\bar\partial\varphi (w)\\
 &=\int_{\B_n}\int_{\B_n}  \rho_L(z,w) D\varphi(z) D\varphi(w) d\mu(z) d\mu(w) \ , \nonumber
\end{align}
where $\rho_L(z,w)=4\Cov (\log|\hat f(z)|,\log|\hat f(w)|)$. By \cite{HKPV}*{Lemma 3.5.2}
\begin{align*}
  \rho_L(z,w)=\sum_{m=1}^\infty\frac{|\theta_L(z,w)|^{2m}}{m^2}\ ,
 \end{align*}
 where
\begin{equation}\label{norm-kernel}
\theta_L(z,w)=\frac{K_L(z,w)}{\sqrt{K_L(z,z)} \sqrt{K_L(w,w)}}=\frac{(1-|z|^2)^{L/2}(1-|w|^2)^{L/2}}{(1-\bar z\cdot w)^L}
\end{equation}
is the normalised covariance kernel of $f_L$.

We see next that only the near diagonal part of the double integral \eqref{bipotential} is relevant. 
Let $\varepsilon_L=2/L^{n+1}$, and split the integral in three parts
\begin{align}
  \Var[I_L(\varphi)]&=\int_{\rho_L(z,w)\leq \varepsilon_L} \rho_L(z,w) D\varphi(z) D\varphi(w) d\mu(z) d\mu(w) \tag{I1} \\
  &\quad + \int_{\rho_L(z,w)> \varepsilon_L} \rho_L(z,w) (D\varphi(z)-D\varphi(w)) D\varphi(w) d\mu(z) d\mu(w) \tag{I2}\\
  &\quad + \int_{\rho_L(z,w)> \varepsilon_L} \rho_L(z,w) ( D\varphi(w))^2 d\mu(z) d\mu(w)\tag{I3}\ .
\end{align}

The bound for the first integral is straight-forward,
\[
 |\textrm{I1}|\leq \varepsilon_L \int_{\rho_L(z,w)\leq \varepsilon_L} |D\varphi(z) D\varphi(w)| d\mu(z) d\mu(w)\leq \varepsilon_L\left(\int_{\B_n} |D\varphi(z)|\, d\mu(z)\right)^2\ .
\]

In order to bound (I2) let $\phi_z$ denote the automorphism of $\B_n$ exchanging $z$ and 0, so that $|\theta_L(z,w)|^2=(1-|\phi_z(w)|^2)^L$ (see \eqref{dh}.
By the uniform continuity of $i\partial\bar\partial\varphi$ there exists $\eta(t)$ with $\lim\limits_{t\to 1} \eta(t)=0$ such that for all $z,w\in\B_n$,
\[
 |D\varphi(z)-D\varphi(w)|\leq \eta(1-|\phi_z(w)|^2)\ .
\]
An immediate estimate shows that
\[
 x\leq\sum_{m=1}^\infty\frac{x^m}{m^2}\leq 2x\qquad x\in[0,1]\ ,
\]
and therefore
\begin{equation}\label{est-rho}
 (1-|\phi_z(w)|^2)^L\leq \rho_L(z,w)\leq 2 (1-|\phi_z(w)|^2)^L\ .
\end{equation}
By the invariance by automorphisms of the measure $d\mu$, we get (after changing appropriately the value of $C_\varphi$ at each step)
\begin{align*}
 |\textrm{I2}|&\leq 2 C_\varphi\int\limits_{\{\rho_L(z,w)>\varepsilon_L\}\cap(supp\; \varphi\times supp\; \varphi)} (1-|\phi_z(w)|^2)^L\, \eta(1-|\phi_z(w)|^2) d\mu(z) d\mu(w)\\
 &\leq C_\varphi\; \eta((\varepsilon_L/2)^{1/L})\int\limits_{\{\rho_L(z,w)>\varepsilon_L\}\cap(supp\; \varphi\times supp\; \varphi)} (1-|\phi_z(w)|^2)^L d\mu(z) d\mu(w)\\
&\leq C_\varphi\; \eta((\varepsilon_L/2)^{1/L})\int_{supp\;\varphi}\left(\int_{z: \rho_L(z,0)>\varepsilon_L} (1-|z|^2)^L d\mu(z)\right) d\mu(w)\\
&\leq C_\varphi\; \eta((\varepsilon_L/2)^{1/L}) \int_{z: \rho_L(z,0)>\varepsilon_L} (1-|z|^2)^L d\mu(z)\ .
\end{align*}

Since $\eta(t)\lesssim |1-t|$ for $t$ near 1, we see that
\[
 \eta((\varepsilon_L/2)^{1/L})\lesssim 1-(\varepsilon_L/2)^{1/L}\simeq\frac{\log L}L
\]
and therefore
\[
 |\textrm{I2}|\lesssim \frac{\log L}L \int_{z: \rho_L(z,0)>\varepsilon_L} (1-|z|^2)^L d\mu(z)\ .
\]

On the other hand, using again the invariance, we see that
\begin{align*}
 \textrm{I3}&=
 \left(\int_{\B_n} (D\varphi(w))^2 d\mu(w)\right) \int_{z: \rho_L(z,0)>\varepsilon_L} (1-|z|^2)^L d\mu(z)\ .
\end{align*}
Since $\lim\limits_{L\to\infty} \varepsilon_L^{1/L}=1$ we have thus $\textrm{I2}=\textrm{o}(\textrm{I3})$ and therefore
\begin{equation}\label{ls}
 \Var[I_L(\varphi)]=\textrm{I3} \bigl(1+\textrm{O}(\frac{\log L}L)\bigr) +\textrm{O}(\varepsilon_L)\ .
\end{equation}

It remains to compute the second factor in I3:
\[
 J:=\int_{z: \rho_L(z,0)>\varepsilon_L} \rho_L(z,0) d\mu(z)=\sum_{m=1}^\infty\frac 1{m^2} \int_{z: \rho_L(z,0)>\varepsilon_L} (1-|z|^2)^{mL} d\mu(z)\ .
\]

By \eqref{est-rho},
\[
 \{|z|^2<1-\varepsilon_L^{1/L}\}\subset \{\rho_L(z,0)>\varepsilon_L\}\subset \{|z|^2<1-(\varepsilon_L/2)^{1/L}\}
\]
and therefore
\[
 J=\sum_{m=1}^\infty\frac 1{m^2} \int\limits_{|z|^2<1-(\frac{\varepsilon_L}2)^{1/L}} (1-|z|^2)^{mL} d\mu(z)- \int\limits_{
 \stackrel{|z|^2<1-(\frac{\varepsilon_L}2)^{1/L}}{\rho_L(z,0)\leq \varepsilon_L}
 } (1-|z|^2)^{mL} d\mu(z)\ .
\]

\textit{Claim 1:} The sum of the negative terms is negligible. More precisely,
\[
\sum_{m=1}^\infty\frac 1{m^2} \int\limits_{
 \stackrel{|z|^2<1-(\frac{\varepsilon_L}2)^{1/L}}{\rho_L(z,0)\leq \varepsilon_L}
 } (1-|z|^2)^{mL} d\mu(z) = \textrm{O} \left(\frac{\log^{n-1} L}{L^{2n+1}}\right)\ .
\]

Assuming this we have
\begin{equation}\label{J}
 J= \sum_{m=1}^\infty\frac 1{m^2} I_m +\textrm{o}(L^{-n})
\end{equation}
where, denoting $r_L=1-(\frac{\varepsilon_L}2)^{1/L}$,
\[
 I_m=\int_{|z|^2<r_L} (1-|z|^2)^{mL} d\mu(z)=\int_{|z|^2<r_L} (1-|z|^2)^{mL-n-1} d\nu(z)\ .
\]
Integration in polar coordinates (\cite{Ru}*{1.4.3}) shows that $I_m$ is a truncated beta function:
\begin{align*}
 I_m&= n \int_0^{\sqrt{r_L}}(1-r^2)^{mL-n-1}r^{2(n-1)} 2r\; dr
 =n \int_0^{r_L} (1-t)^{mL-n-1} t^{n-1} dt \ .
\end{align*}

A repeated integration by parts yields, for $n,k>0$,
 \begin{multline*}
n \int_0^{r} (1-t)^{k-1} t^{n-1} dt=\\
=\frac{n! \Gamma(k)}{\Gamma(n+k)} \bigl(1-(1-r)^{k+n-1}\bigr)-\sum_{j=1}^{n-1}\frac{n! \Gamma(k)}{\Gamma(n-j)\Gamma(k+j)} (1-r)^{k+j-1} r^{n-j}\ ,
 \end{multline*}
thus taking $k=mL-n$ we deduce from \eqref{J} that
\begin{multline*}
J=n!\sum_{m=1}^\infty \frac 1{m^2}\left[\frac{\Gamma(mL-n)}{\Gamma(mL)}[1-(1-r_L)^{mL-1}]-\right. \\
\left. \sum_{j=1}^{n-1}\frac{\Gamma(mL-n)}{\Gamma(n-j)\Gamma(mL-n+j)} (1-r_L)^{mL-n+j-1} r_L^{n-j}\right]\ .
\end{multline*}

\textit{Claim 2:} The negative terms in this sum are again negligible. Specifically, 
\[
 \sum_{m=1}^\infty \frac 1{m^2} \sum_{j=1}^{n-1}\frac{\Gamma(mL-n)}{\Gamma(n-j)\Gamma(mL-n+j)} (1-r_L)^{mL-n+j-1} r_L^{n-j}= \textrm{O}\left(\frac{\log^{n+j} L}{L^{2n+3}}\right).
\]

The asymptotics of the $\Gamma$-function
\begin{equation}\label{asymptoticsGamma}
 \lim_{m\to\infty}\frac{\Gamma(m+n)}{\Gamma(m) m^n}=1 
\end{equation}
and the fact that $(1-r_L)^{mL}=(\varepsilon_L/2)^m$ tends to 0 as $L\to \infty$ yield
\begin{align*}
 J&=n!\sum_{m=1}^\infty \frac 1{m^2}\frac{\Gamma(mL-n)}{\Gamma(mL)}+ \textrm{o}(L^{-n})
 =n!\sum_{m=1}^\infty \frac 1{m^2}\frac{1}{(mL)^n}+ \textrm{o}(L^{-n})\\
 &=n! \frac 1{L^n}\zeta(n+2)+ \textrm{o}(L^{-n}).
\end{align*}
Plugging this in \eqref{ls} we finally obtain the stated result.
\end{proof}

\textit{Proof of Claim 1}.
Denote by $N$ the sum we need to estimate. Using that $\varepsilon_L=2L^{-(n+1)}$, unwinding the condition $\rho_L(z,0)\leq \varepsilon_L$ a rough estimate yields
\begin{align*}
 N&=\sum_{m=1}^\infty \frac 1{m^2}\int\limits_{(\frac{\varepsilon_L}2)^{1/L}\leq 1-|z|^2\leq \varepsilon_L^{1/L}} (1-|z|^2)^{mL-n-1} d\nu(z)\\
 &\lesssim\sum_{m=1}^\infty \frac 1{m^2} (\varepsilon_L^{1/L})^{L-n-1}\; \nu\bigl(\{1-\varepsilon_L^{1/L}\leq |z|^2\leq 1-(\frac{\varepsilon_L}2)^{1/L}\}\bigr)\\
 &\lesssim \varepsilon_L^{1-\frac{n}L} \left(1-\frac 1{2^{1/L}}\right) \left(1-(\frac{\varepsilon_L}2)^{1/L}\right)^{n-1}\\
 &\leq \frac 2{L^{n+1}}\left(\frac{\log 2}L+\textrm{o}(L^{-1})\right)\left(\frac{n+1}L\log L + \textrm{o}(\frac{\log L}L)\right)^{n-1}=
 \textrm{O} \left(\frac{\log^{n-1} L}{L^{2n+1}}\right)\ .
\end{align*}

\textit{Proof of Claim 2}.
We have
\begin{align*}
(1-r_L)^{mL-n+j-1} r_L^{n-j}&=L^{-\frac{n+1}L (mL-n+j-1)}\left(\frac{n+1}L \log L+ \textrm{o}(L^{-n})\right)^{n-j}\\
&=\textrm{O}\left(\frac{\log^{n+j} L}{L^{(n+1)m+n+j}}\right) \ .
\end{align*}
On the other hand, the number of terms in the sum in $j$ is independent of $L$, so by \eqref{asymptoticsGamma},
for $L$ big enough and for all $j$
\[
 \lim_{L\to\infty} \frac{\Gamma(mL-n)}{\Gamma(mL-n+j)}=\frac 1{(mL)^j}\ .
\]
Thus, denoting by $M$ the double sum in $m$ and $j$ we see that
\begin{align*}
 M&\simeq \sum_{m=1}^{\infty} \frac 1{m^2} \sum_{j=1}^{n-1} \frac 1{(mL)^j}\frac{\log^{n+j} L}{L^{(n+1)m+n+j}}=
 \textrm{O}\left(\frac{\log^{n+j} L}{L^{2n+3}}\right)\ .
\end{align*}

As an immediate consequence of the results of M. Sodin and B. Tsirelson and the previous computations we obtain the asymptotic normality of $I_L(\varphi)$.

\begin{corollary}\label{normality} As $L\to\infty$ the distribution of the normalised variables
 \[
\frac{I_L(\varphi)-\E[I_L(\varphi)]}{\sqrt{\Var(I_L(\varphi))}}
\]
tend weakly to the standard (real) gaussian.
\end{corollary}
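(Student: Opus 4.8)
The plan is to derive Corollary~\ref{normality} from the central limit theorem for linear statistics of zeros of Gaussian analytic functions of Sodin and Tsirelson \cite{ST1}; the only analytic input it requires is the off-diagonal decay of the normalised kernel $\theta_L$, which is exactly what the proof of Theorem~\ref{LS} already provides. As in that proof, Stokes' theorem gives
\[
 I_L(\varphi)-\E[I_L(\varphi)]=2\int_{\B_n}\log|\hat f(z)|\,D\varphi(z)\,d\mu(z)\ ,
\]
where $\hat f=f_L/\sqrt{K_L(\cdot,\cdot)}$ satisfies $\hat f(z)\sim N_{\C}(0,1)$ for each $z$ and $\E[\hat f(z)\overline{\hat f(w)}]=\theta_L(z,w)$ as in \eqref{norm-kernel}. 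The random variable $I_L(\varphi)$ is real, and its normalisation $(I_L(\varphi)-\E[I_L(\varphi)])/\sqrt{\Var[I_L(\varphi)]}$ has first cumulant $0$ and second cumulant $1$ by construction (we may assume $D\varphi\not\equiv0$, so that $\Var[I_L(\varphi)]\simeq L^{-n}>0$ for $L$ large, by Theorem~\ref{LS}); so by the method of moments --- the Gaussian being determined by its moments --- it suffices to show that, for every $k\geq3$, the $k$-th cumulant of $I_L(\varphi)$ divided by $(\Var[I_L(\varphi)])^{k/2}$ tends to $0$ as $L\to\infty$.

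To control these cumulants I would argue exactly as for the variance. For fixed $z_1,\dots,z_k$ the vector $(\hat f(z_1),\dots,\hat f(z_k))$ is jointly complex Gaussian with unit marginal variances and covariances $\theta_L(z_i,z_j)$, so by the cumulant expansion of Sodin and Tsirelson \cite{ST1}, the multilinear analogue of \cite{HKPV}*{Lemma 3.5.2}, the joint cumulant
\[
 \gamma_k(z_1,\dots,z_k)=\operatorname{cum}\bigl(\log|\hat f(z_1)|,\dots,\log|\hat f(z_k)|\bigr)
\]
is an absolutely convergent series in the variables $\theta_L(z_i,z_j)$ whose surviving monomials are indexed by connected graphs on $\{1,\dots,k\}$; in particular, since $|\theta_L|\leq1$,
\[
 |\gamma_k(z_1,\dots,z_k)|\lesssim_{k}\sum_{T}\ \prod_{(i,j)\in T}|\theta_L(z_i,z_j)|\ ,
\]
the sum being over the finitely many spanning trees $T$ of the complete graph on $\{1,\dots,k\}$. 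By multilinearity of cumulants, $\operatorname{cum}_k(I_L(\varphi))=2^k\int_{\B_n^k}\gamma_k(z_1,\dots,z_k)\prod_{i=1}^{k}D\varphi(z_i)\,d\mu(z_i)$. Now I would use $|\theta_L(z,w)|^2=(1-|\phi_z(w)|^2)^L$ together with the invariance of $d\mu$ under $\Aut(\B_n)$, the compact support of $D\varphi$, and the beta-integral estimate $\int_{\B_n}(1-|z|^2)^{s}\,d\mu(z)=\int_{\B_n}(1-|z|^2)^{s-n-1}\,d\nu(z)\simeq s^{-n}$ that appears repeatedly in the proof of Theorem~\ref{LS}: integrating each spanning tree from its leaves inward, every one of its $k-1$ edges contributes a factor $\textrm{O}(L^{-n})$ and the single remaining vertex contributes $\int_{\B_n}|D\varphi|\,d\mu=\textrm{O}(1)$. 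Hence $|\operatorname{cum}_k(I_L(\varphi))|=\textrm{O}(L^{-n(k-1)})$, and since $\Var[I_L(\varphi)]\simeq L^{-n}$ the normalised $k$-th cumulant is $\textrm{O}(L^{-n(k-2)/2})$, which tends to $0$ for every $k\geq3$, as required.

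Most of this is bookkeeping of the same type as in Theorem~\ref{LS}: for each fixed $k$ there are finitely many spanning trees, the coefficients of the series (and their number) do not depend on $L$, and the constants hidden in the $\textrm{O}$'s are uniform in $L$ by the same truncated-beta computations that produced the error term there. The one substantive ingredient, which I would quote from \cite{ST1} rather than reprove, is the structural bound on $\gamma_k$ --- that the joint cumulant of the $\log|\hat f(z_i)|$ factors through \emph{connected} graphs in the covariances $\theta_L(z_i,z_j)$, so that every vertex is incident to at least one edge of a spanning tree. This is the mechanism by which the near-diagonal concentration of $\theta_L$, which we already have, propagates to all higher cumulants, and it is the step that would be the main obstacle in a self-contained treatment.
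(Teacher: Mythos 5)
Your plan is sound and would give the result, but it takes a genuinely different route from the paper's, which is much shorter. After observing (exactly as you do) that $I_L(\varphi)$ differs from $J_L(\varphi)=\int_{\B_n}\log|\hat f_L|^2\,D\varphi\,d\mu$ only by a deterministic summand, the paper invokes the abstract CLT of Sodin--Tsirelson, \cite{ST1}*{Theorem 2.2}, as a black box. Its two sufficient hypotheses are (a) a non-degeneracy condition, namely that $\iint|\theta_L|^2D\varphi\,D\varphi\,d\mu\,d\mu$ is not asymptotically small relative to $\sup_w\int|\theta_L(\cdot,w)|\,d\mu$, and (b) the off-diagonal decay $\sup_w\int|\theta_L(\cdot,w)|\,d\mu\to0$; both are verified in a line or two from the $\Aut(\B_n)$-invariance of $\mu$ and the estimates already collected in the proof of Theorem~\ref{LS}. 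You instead reprove a CLT directly by the method of moments, bounding the $k$-th joint cumulant of $\log|\hat f(z_i)|$ by spanning trees and integrating edge by edge; this is more work, but it is self-contained modulo the single structural lemma you quote, and it has the advantage of producing explicit decay rates $L^{-n(k-2)/2}$ for the normalised higher cumulants. The one thing to fix is the attribution: the ``connected-graph/clustering'' bound on joint cumulants of $\log|\hat f(z_i)|$ that your argument rests on is not in \cite{ST1} (Sodin--Tsirelson prove their Theorem~2.2 by a different argument, not a cumulant diagram expansion). The cumulant diagram expansion and the spanning-tree estimate are carried out in Nazarov--Sodin \cite{NS}; cite that (or Shiffman--Zelditch \cite{SZ08}) for the lemma and your proof is complete.
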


\begin{proof}
Consider the normalised GAF $\hat f_L(z)$, whose covariance kernel is $\theta_L(z,w)$. Notice that
\begin{align*}
 J_L(\varphi)&:=\int_{\B_n}\log |\hat f_L(z)|^2 D\varphi(z)\; d\mu(z)=I_L(\varphi)-\int_{\B_n}\log K_L(z,z)\; D\varphi(z)\; d\mu(z)\ ,
\end{align*}
and that the second term has no random part. Hence $(J_L(\varphi)-\E[J_L(\varphi)])/\sqrt{\Var[J_L(\varphi)]}$ and $(I_L(\varphi)-\E[I_L(\varphi)])/\sqrt{\Var[I_L(\varphi)]}$ have the same distribution, and 
according to \cite{ST1}*{Theorem 2.2}, to prove the asymptotic normality of $I_L(\varphi)$ it is enough to see that
\begin{align*}
 \textrm{(a)}\qquad & \liminf_{L\to\infty} \frac{\int_{\B_n} \int_{\B_n} |\theta_L(z,w)|^2 D\varphi(z)\, D\varphi(w)\, d\mu(z)\; d\mu(w)}
 {\sup\limits_{w\in\B_n} \int_{\B_n} |\theta_L(z,w)|\; d\mu(z)} >0 \\
  \textrm{(b)}\qquad & \lim_{L\to\infty} \sup_{w\in\B_n} \int_{\B_n} |\theta_L(z,w)|\; d\mu(z)=0\ .
\end{align*}

By the invariance under automorphisms of the measure $\mu$
\[
 \int_{\B_n} |\theta_L(z,w)|\; d\mu(z)=\int_{\B_n} (1-|z|^2)^{L/2}\; d\mu(z)\ ,
\]
and (b) follows. 

On the other hand the double integral in the numerator of (a) is essentially the same we have found in the proof of the previous theorem (see \eqref{bipotential}), and the same computations show that (a) holds. 
\end{proof}

\section{Large deviations}

We begin with the proof of Corollary~\ref{conseqSLD} (assuming Theorem~\ref{smoothlargedeviations}).

\begin{proof}[Proof of Corollary~\ref{conseqSLD}]
 Since $\omega^{n-1}\wedge [Z_{f_L}]$ is a positive current, the functional $I_L(\psi)$ is monotone, i.e., if $\psi_1\leq \psi_2$ then $I_L(\psi_1)\leq I_L(\psi_2)$.
 
 Let $\psi_1,\psi_2$ be smooth compactly supported functions in $\B_n$ such that $0\leq \psi_1\leq \chi_U\leq \psi_2\leq 1$ and
 \[
  \int_{\B_n} \psi_1\; d\mu \geq \mu(U)(1-\delta)\ ,\qquad \int_{\B_n}  \psi_2\; d\mu\leq \mu(U)(1+\delta).
 \]
Outside an exceptional set of probability $e^{-cL^{n+1}}$ we have, by Theorem~\ref{smoothlargedeviations},
 \begin{align*}
  I_L(U)&\leq I_L(\psi_2)\leq (1+\delta) \E[I_L(\psi_2)]=(1+\delta)L\int_{\B_n} \psi_2 d\mu\leq (1+\delta)^2 L\mu(U)\ .
 \end{align*}
Similarly, using $\psi_1$, we see that 
\[
 I_L(U)\geq (1-\delta)^2 L\mu(U)
\]
outside another set of probability $e^{-cL^{n+1}}$, which after appropiately changing the value of $\delta$ completes the proof.
\end{proof}

A different proof of Corollary~\ref{conseqSLD} can be obtained by following the scheme of \cite{HKPV}*{Theorem 7.2.5}, using the Poisson-Szeg\"o representation of the averages $\int_{|\xi|=1}\log|f_L(\xi)|d\sigma(\xi)$ instead of Jensen's formula.

\textit{Proof of Theorem \ref{smoothlargedeviations}}. 
Applying Stokes' theorem, we have
\begin{align*}
I_{L}(\varphi)-\E\left[I_{L}(\varphi)\right]& 
 =\int_{\B_n}\varphi\wedge \frac{i}{2\pi}\partial\overline{\partial}\log\frac{|f_L|^2}{K_L(z,z)}
=\int_{\B_n}\log\frac{|\hat f_L|^2}{K_L(z,z)}\frac{i}{2\pi}\partial\overline{\partial}\varphi.
\end{align*}
Thus,  
\[
|I_{L}(\varphi)-\E[I_{L}(\varphi)]|\leq \|D\varphi\|_\infty \int_{\text{supp}\varphi}\left|\log |\hat f_{L}(z)|^2\right|d\mu(z).
\]
By \eqref{E}, the proof of Theorem \ref{smoothlargedeviations} will be completed as soon as we prove the following Lemma.
\begin{lemma}\label{mainlemmaSLD}
For any regular compact set $K$ and any $\delta>0$ there exists $c=c(\delta,K)$ such that
\[
\mathbb P\left[ \int_{K}\left|\log |\hat f_L(z)|^2 \right|d\mu(z)>\delta L \right]\leq e^{-cL^{n+1}}.
\]
\end{lemma}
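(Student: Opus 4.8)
The plan is to split the integral $\int_K |\log|\hat f_L(z)|^2|\,d\mu(z)$ into a contribution from the set where $|\hat f_L|$ is close to $1$ (the ``bulk'') and contributions from the small exceptional sets where $|\hat f_L(z)|$ is either very large or very small. Since $\hat f_L(z)\sim N_\C(0,1)$ for each fixed $z$, the random variable $\log|\hat f_L(z)|^2$ has a fixed distribution with exponential tails on the right and logarithmic-type behaviour on the left, so pointwise it is $O(1)$ in probability; the work is to upgrade this to a statement that holds simultaneously over all of $K$ with overwhelming probability, at the cost of the exponentially small factor $e^{-cL^{n+1}}$.

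First I would control the upper tail. Writing $M_L=\sup_{z\in K'}|\hat f_L(z)|$ for a slightly enlarged regular compact neighbourhood $K'$ of $K$, a standard estimate for the supremum of a GAF (union bound over an $L^{-1}$-net in the pseudo-hyperbolic metric, of cardinality $\mathrm{O}(L^{n+1})$ since $\mu(K')<\infty$ and each ball $E(w,1/L)$ has $\mu$-measure $\simeq L^{-(n+1)}$, combined with the Gaussian tail $\mathbb P[|\hat f_L(w)|>t]=e^{-t^2}$ and a Bernstein/Cauchy-type gradient bound to pass from the net to all of $K$) gives $\mathbb P[M_L>\lambda L^{1/2}]\le e^{-cL^{n+1}}$ for suitable $\lambda$. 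On the complement of this event, $\int_K (\log|\hat f_L|^2)_+\,d\mu\lesssim \mu(K)\log L \ll \delta L$ for $L$ large, so the positive part is harmless.

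The lower tail is the main obstacle, since $\log|\hat f_L|$ is unbounded below near the zeros $Z_{f_L}$ and these zeros accumulate in $K$ (there are $\simeq L\mu(K)$ of them). Here I would use the sub-mean-value property of $\log|f_L|$ together with the Edelman--Kostlan intensity $\E[Z_{f_L}]=L\omega$: integrating $-\log|\hat f_L|$ over $K$ is, by a Jensen-type / Poisson--Szeg\H{o} argument, controlled by the number of zeros in a slightly larger set minus a harmonic (hence controllable) remainder, so $\int_K(\log|\hat f_L|^2)_-\,d\mu$ is dominated by a constant multiple of $I_L(\chi_{K'})/L$ plus $\sup_{K'}\log^+|\hat f_L|$-type terms already handled above; since $\E[I_L(\chi_{K'})]=L\mu(K')$, one needs the one-sided large-deviation bound $\mathbb P[I_L(K')>CL]\le e^{-cL^{n+1}}$. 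This last estimate I would obtain directly, not from Corollary~\ref{conseqSLD} (to avoid circularity), by bounding the number of zeros in $E(w,1/L)$-type balls by $\log$ of a ratio of values of $f_L$ at nearby points (Jensen on a single pseudo-hyperbolic ball), and then using the net-plus-Gaussian-tail argument again: outside an event of probability $e^{-cL^{n+1}}$, on each of the $\mathrm{O}(L^{n+1})$ balls of the net one has $|f_L|$ bounded above by $e^{\lambda L^{1/2}}$ times its value at the centre and bounded below at the centre by $e^{-\lambda L^{1/2}}$ times a typical size (using that $\hat f_L(w_0)$ is a standard complex Gaussian, hence $\ge e^{-\lambda L^{1/2}}$ except with probability $e^{-c\lambda^2 L}$, and a union bound over the net still leaves probability $e^{-cL^{n+1}}$ after absorbing constants), which forces the zero count in each ball to be $\mathrm{O}(L^{1/2})$ and hence $I_L(K')=\mathrm{O}(L^{1/2}\cdot L^{n+1}\cdot L^{-(n+1)}\cdot L)=\mathrm{O}(L^{3/2})$—more than enough, since we only need $\mathrm{O}(L)$ up to tolerating a slightly worse but still exponentially small probability, which one arranges by choosing the net at scale $L^{-1}$ and being a little more careful with the per-ball bound so that the zero count per ball is $\mathrm{O}(1)$ on average. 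Assembling the three pieces — negligible bulk $\lesssim\mu(K)\log L$, upper tail $e^{-cL^{n+1}}$, lower tail $e^{-cL^{n+1}}$ — and choosing $L_0$ so that the deterministic terms are below $\delta L$, yields the Lemma. The delicate point throughout is bookkeeping the constants so that the union bound over the $\mathrm{O}(L^{n+1})$-point net, each contributing a factor $e^{-cL}$ or better from a Gaussian tail, produces exactly the claimed $e^{-cL^{n+1}}$.
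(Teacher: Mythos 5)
Your overall plan (split $|\log|\hat f_L|^2|$ into positive and negative parts, control the positive part by a sup estimate and the negative part by relating it to the zero count via Jensen) is a reasonable strategy in spirit, and the paper itself notes after Corollary~\ref{conseqSLD} that a Poisson--Szeg\H{o}/Jensen route exists. But there is a fatal gap in the way you propose to produce the exponent $L^{n+1}$, and a concrete arithmetic error in the lower-tail estimate on which your zero-counting step rests.

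First, the exponent. In the paper the $e^{-cL^{n+1}}$ is generated by \emph{independence} of Taylor coefficients, not by a union bound over a net. The chain is: Lemma~\ref{mainlemmaSLD} reduces to Lemma~\ref{controlmean}, which reduces to Lemma~\ref{controlmax}, whose part (a) reduces (via Cauchy's formula) to Lemma~\ref{maxlogfL}: if $|f_L|$ is uniformly small on a fixed ball, then for each of $\simeq L$ levels $m$ in the window $A=\{(1-\delta)\frac{r^2L}{1-r^2}\le m\le\frac{r^2L-n}{1-r^2}\}$ the sum $\sum_{|\alpha|=m}|a_\alpha|^2$ (a Gamma$(N(n,m))$ variable with $N(n,m)\simeq L^{n-1}$) must fall below $(1+\epsilon)^{-m}$ with $m\simeq L$; each such level event has probability $\lesssim e^{-CL^n}$, and these $\simeq L$ events are \emph{independent} (disjoint coefficient sets), so the product is $e^{-CL^{n+1}}$. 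Your proposal instead union-bounds over $O(L^{n+1})$ net points each carrying a Gaussian-tail factor $e^{-cL}$. A union bound over $N$ events of probability $p$ gives $Np$, not $p^N$; what you actually obtain is $L^{n+1}e^{-cL}=e^{-cL(1+o(1))}$, which is two orders of exponent short of the claim. Independence is what multiplies probabilities, and the point evaluations $\hat f_L(w_j)$ on a net are strongly correlated at scales below $\simeq L^{-1/2}$, so no such independence is available to you.

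Second, and worse, the pointwise lower-tail bound you invoke is simply false. Since $|\hat f_L(w_0)|^2\sim\mathrm{Exp}(1)$,
\[
\mathbb P\bigl[|\hat f_L(w_0)|<e^{-s}\bigr]=1-e^{-e^{-2s}}\approx e^{-2s},
\]
so with $s=\lambda L^{1/2}$ this is $e^{-2\lambda L^{1/2}}$, \emph{not} $e^{-c\lambda^2 L}$. The modulus of a complex Gaussian has a light lower tail: no threshold produces a pointwise lower-tail probability below $e^{-CL}$ unless you push the threshold below $e^{-cL}$, at which point the resulting Jensen/zero-count bound per ball is useless. This is exactly why the paper cannot and does not control $-\log|\hat f_L|$ by a single point evaluation per ball, and instead passes to the \emph{maximum over a whole ball} and unwinds it into a statement about $\simeq L^{n+1}$ independent coefficients being simultaneously small.

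Two secondary bookkeeping issues: $\mu(E(w,s))=s^{2n}/(1-s^2)^n$, so a pseudo-hyperbolic $L^{-1}$-ball has $\mu$-measure $\simeq L^{-2n}$ (not $L^{-(n+1)}$), and the natural correlation scale of $\hat f_L$ is $L^{-1/2}$, so an $L^{-1}$-net is finer than necessary. Neither observation rescues the argument. By contrast, the paper's proof of Lemma~\ref{mainlemmaSLD} itself is short: cover $K$ by a \emph{fixed}, $L$-independent number $N\simeq\mu(K)/\eta$ of pseudo-hyperbolic balls of invariant volume $\eta$, apply Lemma~\ref{controlmean} on each, sum, and choose $\eta$ so that $\mu(K)\eta^{1/n}=\delta$; all the probabilistic work lives in Lemma~\ref{controlmax}.
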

The key ingredient in the proof of this lemma is given by the following control on the average of 
$\bigl|\log |\hat f_L|^2\bigr|$ over pseudo-hyperbolic balls. 

\begin{lemma}\label{controlmean}
There exists a constant $c>0$ such that for a hyperbolic ball $E=E(z_0,s)$, $z_0\in\B_n$, $s\in (0,1)$, 
\[
\mathbb P\left[\frac{1}{\mu(E)}\int_{E}\left|\log |\hat f_L(\xi)|^2 \right|d\mu(\xi)>5L\mu(E)^{1/n} \right]\leq e^{-cL^{n+1}}.
\] 
\end{lemma}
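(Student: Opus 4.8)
\textit{Proof strategy.}
The plan is to estimate separately the positive and negative parts of $\log|\hat f_L|^2$. First, using that the transformations $T_w$ are isometries of $B_L(\B_n)$ together with \eqref{dh} (the same mechanism that makes the law of $[Z_{f_L}]$ automorphism invariant), the distributions of $|\hat f_L|$ and the measure $\mu$ are invariant, so we may assume $z_0=0$, i.e. $E=B(0,s)$; write $v:=\mu(E)$. It then suffices to show that each of $\int_E(\log|\hat f_L|^2)_{\pm}\,d\mu$ is at most $\tfrac52 L v^{1+1/n}$ outside an event of probability $e^{-cL^{n+1}}$, since dividing by $v$ turns $5Lv^{1+1/n}$ into $5Lv^{1/n}$.

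For the positive part I would use the standard sup-norm bound for Gaussian analytic functions: covering $B(0,s)$ by $O(\varepsilon^{-2n})$ Euclidean balls of radius $\varepsilon$ and using that at each centre $f_L$, and its gradient, are complex Gaussians of variance $e^{O(L)}$ (the gradient being itself a GAF, with variance $\lesssim (1-s^2)^{-L}\operatorname{poly}(L)$ on $B(0,s)$), a union bound together with a Cauchy/mean-value step yields $\mathbb P[\sup_{B(0,s)}|f_L|>e^{CL}]\le e^{-cL^{n+1}}$ with $C$ comparable to $\log\frac1{1-s^2}$. Since $|\hat f_L|=(1-|z|^2)^{L/2}|f_L|\le|f_L|$ on $\B_n$, on the complementary event $(\log|\hat f_L|^2)_+\le 2CL$ throughout $E$, hence $\int_E(\log|\hat f_L|^2)_+\,d\mu\le 2CLv$, and one finishes with the elementary estimate $\log\frac1{1-s^2}\lesssim\mu(B(0,s))^{1/n}$.

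For the negative part — the main point — write $\log|\hat f_L(z)|^2=\log|f_L(z)|^2-L\log\frac1{1-|z|^2}$, so that $(\log|\hat f_L|^2)_-\le(\log|f_L|^2)_-+L\log\frac1{1-s^2}$ on $E$, the last term being controlled as above. To bound $\int_E(\log|f_L|^2)_-\,d\mu$ I would use that $u:=\log|f_L|^2$ is plurisubharmonic with Riesz measure the trace $\tau_{f_L}$ of the zero current, and apply the Poisson--Jensen representation on the concentric ball $E'=B(0,\tfrac{1+s}2)$: $u=\mathcal P_{E'}[u|_{\partial E'}]-\int_{E'}G_{E'}(\cdot,\zeta)\,d\tau_{f_L}(\zeta)$, whence $u_-\le(\mathcal P_{E'}[u|_{\partial E'}])_-+\int_{E'}G_{E'}(\cdot,\zeta)\,d\tau_{f_L}(\zeta)$. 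The boundary term is handled just like the positive part: for $\zeta$ on the fixed sphere $\partial E'$ the variable $\hat f_L(\zeta)$ is a standard complex Gaussian, so $(\log|f_L(\zeta)|^2)_-\le\log^-|\hat f_L(\zeta)|^2+L\log\frac1{1-s'^2}$, the mean of $\int_{\partial E'}\log^-|\hat f_L|^2\,d\sigma$ is an absolute constant, and its upper deviations are controlled by the exponential tail of the $\sigma$-measure of the sublevel sets $\{|\hat f_L(\zeta)|<\lambda\}$. The Green-potential term is bounded by Fubini by $\big(\sup_{\zeta\in E'}\int_E G_{E'}(z,\zeta)\,d\mu(z)\big)\tau_{f_L}(E')$, and here one uses that, like $\mu$, the expected zero density $L\,\omega\wedge\beta^{n-1}$ is concentrated near $\partial E'$, where $G_{E'}$ is small, together with $\tau_{f_L}(E')\le CL$ outside an event of probability $e^{-cL^{n+1}}$ (obtainable from Jensen's formula, the sup bound already in hand, and a lower bound for $\sup_{\partial E''}|f_L|$, or from the Poisson--Szeg\"o representation of $\int_{|\xi|=1}\log|f_L(\xi)|\,d\sigma$). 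Collecting the finitely many exceptional events and adjusting $c$ gives the lemma.

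I expect the negative part to be the real obstacle, and specifically pinning down the exact power $v^{1+1/n}$: a naive bound that replaces the Green potential by a fixed multiple of the zero count $\tau_{f_L}(E')$, or that replaces $\log^-|f_L|$ near a zero by a value $\log^-|f_L(z_\ast)|$ at a reference point, loses a factor that is harmless for a fixed ball but spoils the dependence on $s$. One must genuinely exploit the concentration of both $\mu$ and the zero density near $\partial E'$, and the doubly-exponential smallness of ``$\hat f_L$ uniformly small on a ball'' is what upgrades the moment estimates to the required $e^{-cL^{n+1}}$ tail.
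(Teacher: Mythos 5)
Your treatment of the positive part parallels the paper's Lemma~\ref{controlmax}(b), but the paper handles the negative part by a completely different and much shorter mechanism: it never touches Poisson--Jensen or Green potentials. The two ingredients are Lemma~\ref{controlmax}(a), namely $\mathbb P\bigl[\max_E\log|\hat f_L|^2<-\delta L\bigr]\le e^{-cL^{n+1}}$ --- proved in Lemma~\ref{maxlogfL} by showing via Cauchy's formula that a small maximum on $\{|z|=r\}$ forces, for $\simeq L$ values of $m\simeq L$, all $\simeq L^{n-1}$ Taylor coefficients at level $m$ to be abnormally small, at cost $e^{-cL^n}$ per level --- and Lemma~\ref{invlemma}, the explicit sub-mean-value inequality $\log|\hat f_L(\lambda)|^2\le\frac1{\mu(E)}\int_E\log|\hat f_L|^2\,d\mu+L\eps(n,s)$ with $\eps(n,s)\le\mu(E)^{1/n}$, which comes from the subharmonicity of $\log|f_L|^2$ together with an invariant computation of $\frac1{\mu(E)}\int_E\log(1-|\xi|^2)\,d\mu(\xi)$. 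Feeding the good point $\lambda$ furnished by (a) into this inequality gives $\frac1{\mu(E)}\int_E\log|\hat f_L|^2\,d\mu>-2L\mu(E)^{1/n}$, whence $\int_E\log^-$ is controlled by $\int_E\log^+$ plus $2L\mu(E)^{1+1/n}$, and $\int_E\log^+$ is finished off by (b). No zero counting, no boundary integrals, no Green functions.

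The gaps in your Poisson--Jensen sketch are the ones you half-anticipated, and they are genuine. First, with $E'=B(0,\tfrac{1+s}2)$ and $s$ small you have $\tau_{f_L}(E')\simeq L\mu(E')\simeq L$, not $Lv$, while $\sup_\zeta\int_E G_{E'}(\cdot,\zeta)\,d\mu\simeq v^{1/n}$, so the Green-potential term is of size $Lv^{1/n}$, losing a full factor $v$ against the target $Lv^{1+1/n}$; moreover, on a fixed Euclidean ball far from $\partial\B_n$ the expected zero density is essentially Lebesgue, so the appeal to ``concentration near $\partial E'$'' does not recover the missing factor --- one would have to show that only $\lesssim Lv$ zeros fall where $G_{E'}$ is not small, which is another large-deviation estimate you do not have. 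Second, and more fundamentally, the bound $\tau_{f_L}(E')\le CL$ with failure probability $e^{-cL^{n+1}}$ is itself equivalent to a lower bound on $\max|f_L|$ over a set, which is precisely Lemma~\ref{controlmax}(a)/Lemma~\ref{maxlogfL}; your suggestion to obtain it ``from Jensen's formula, the sup bound, and a lower bound for $\sup_{\partial E''}|f_L|$'' presupposes what must be proved, and a lower bound at a single point such as $0$ only gives $\log|f_L(0)|\gtrsim-L^{n+1}$ with probability $1-e^{-cL^{n+1}}$, which then swamps the zero count obtained from Jensen. Third, the boundary term $\int_{\partial E'}\log^-|\hat f_L|^2\,d\sigma$ averages an exponentially-tailed field over a $(2n-1)$-dimensional sphere, roughly $(s\sqrt L)^{2n-1}$ effectively independent patches, and it is not clear this produces an $e^{-cL^{n+1}}$ tail at the needed threshold $Lv^{1/n}$. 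The missing idea is precisely the pair Lemma~\ref{controlmax}(a) and Lemma~\ref{invlemma}, which makes all three issues disappear.
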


Let us see first how this allows to complete the proof of Lemma~\ref{mainlemmaSLD}, and therefore of Theorem~\ref{smoothlargedeviations}. 

\begin{proof}[Proof of Lemma~\ref{mainlemmaSLD}] Cover $K$ with pseudohyperbolic balls $E_j=E(\lambda_j,\eps)$, $j=1,\dots,N$ of fixed invariant volume $\mu(E_j)=\eta$ (to be determined later on). A direct estimate shows that $N\simeq \mu(K)/\eta$. 

By Lemma~\ref{controlmean}, outside an exceptional event of probability $Ne^{-cL^{n+1}}\leq e^{-c'L^{n+1}}$,
\begin{align*}
\int_{K}\left|\log|\hat f_L(\xi)|^2\right|d\mu(\xi)&\leq \sum^{N}_{j=1}\int_{E_j}\left|\log |\hat f_L(\xi)|^2 \right|d\mu(\xi) \leq \sum^{N}_{j=1} 5L\eta^{1+1/n}
 \simeq L\mu(K) \eta^{1/n}.
\end{align*}
Choosing $\eta$ such that $\mu(K)\eta^{1/n}=\delta$ we are done.
\end{proof}

Now we proceed to prove Lemma~\ref{controlmean}. A first step is the following lemma.

\begin{lemma}\label{controlmax}
Fix $r<1$ and $\delta>0$. There exists $c>0$ and $L_0=L_0(r,\delta)$ such that for all $L\geq L_0$ and all $z_0\in\B_n$
\begin{itemize}
 \item[(a)] $P \bigl[\max\limits_{E(z_0,r)}\log|\hat f_L(z)|^2<-\delta L \bigr]\leq e^{-cL^{n+1}}$, 
 \item[(b)] $P \bigl[\max\limits_{E(z_0,r)}\log|\hat f_L(z)|^2>\delta L \bigr]\leq e^{-ce^{L\delta/2}}$.
\end{itemize}
Combining both estimates $\mathbb P\bigl[\max\limits_{E(z_0,r)}\left|\log |\hat f_L(z)|^2\right|>\delta L\bigr]\leq e^{-cL^{n+1}}$.

\end{lemma}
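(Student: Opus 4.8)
The plan is to reduce to $z_0=0$ by automorphism invariance and then treat the two tails separately: (b) by a union bound over a fine pseudohyperbolic covering, each piece a routine Gaussian tail estimate, and (a) by showing that uniform smallness of $|\hat f_L|$ on $E(0,r)$ forces a whole family of $\simeq L^n$ of the coefficients $a_\alpha$ to be exponentially small. For the reduction, note that $T_w$ is a unitary of $B_L(\B_n)$, so $T_wf_L$ has the law of $f_L$; a direct computation with \eqref{dh} gives $|\widehat{T_wf_L}(z)|=|\hat f_L(\phi_w(z))|$, where $\hat g:=g/\sqrt{K_L(z,z)}$. Since $\phi_{z_0}$ is an involution with $\phi_{z_0}(E(0,r))=E(z_0,r)$, it follows that $\max_{E(z_0,r)}\log|\hat f_L|^2$ has the same law as $\max_{E(0,r)}\log|\hat f_L|^2$; thus we may assume $z_0=0$, i.e. $E(z_0,r)=\{|z|<r\}$, and it suffices to produce $c,L_0$ depending on $n,r,\delta$ only.

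\emph{Part (b).} Cover $\{|z|<r\}$ by $N$ pseudohyperbolic balls $E(w_j,s_L)$ with $s_L=c_nL^{-1/2}$ and $nc_n^2<1$; since $\mu$ is invariant, $\mu(E(w,s_L))=\mu(E(0,s_L))\simeq L^{-n}$ while $\mu(E(0,r))\simeq_{n,r}1$, so one may take $N\lesssim_{n,r}L^n$. By the reduction applied in each ball, and using $(1-|z|^2)^{L/2}\le1$, $|z^\alpha|\le|z|^{|\alpha|}$ and the maximum principle,
\[
 \max_{E(w_j,s_L)}|\hat f_L|\ \overset{d}{=}\ \max_{|z|\le s_L}|\hat f_L(z)|\ \le\ \max_{|z|=s_L}|f_L(z)|\ \le\ \sum_\alpha|a_\alpha|\Bigl(\tfrac{\Gamma(L+|\alpha|)}{\alpha!\Gamma(L)}\Bigr)^{1/2}s_L^{|\alpha|}=:S_L .
\]
Here $\sum_\alpha\tfrac{\Gamma(L+|\alpha|)}{\alpha!\Gamma(L)}s_L^{2|\alpha|}=(1-ns_L^2)^{-L}$ stays bounded as $L\to\infty$, so $S_L-\E S_L$ is sub-Gaussian with a bounded variance proxy, while a weighted Cauchy--Schwarz gives $\E S_L\lesssim_n L^{O_n(1)}$. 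Hence $\mathbb P[S_L>e^{\delta L/2}]\le\mathbb P[S_L-\E S_L>\tfrac12e^{\delta L/2}]\le e^{-ce^{\delta L}}$ for $L$ large, and a union bound over the $N$ balls yields $\mathbb P[\max_{E(0,r)}\log|\hat f_L|^2>\delta L]\le Ne^{-ce^{\delta L}}\le e^{-c'e^{L\delta/2}}$.

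\emph{Part (a) and the combined bound.} On the event $\{\max_{|z|<r}\log|\hat f_L|^2<-\delta L\}$ we have $|f_L(z)|<e^{-\delta L/2}(1-|z|^2)^{-L/2}$ for all $|z|<r$. For a multi-index $\alpha$ with $|\alpha|\le m^\ast:=\lfloor\tfrac{r^2L}{2(1-r^2)}\rfloor$ (so $m^\ast\simeq_rL$), apply Cauchy's formula on the polycircle $\{|z_k|=\rho_k\}$ with $\rho_k^2=\alpha_k\sigma^2/|\alpha|$ and $\sigma^2=|\alpha|/(L+|\alpha|)<r^2$: this choice maximises $\prod_k\rho_k^{\alpha_k}$ subject to $\sum_k\rho_k^2=\sigma^2$ and then minimises $(1-\sigma^2)^{-L/2}\sigma^{-|\alpha|}$ among admissible $\sigma$. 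After dividing by $\bigl(\tfrac{\Gamma(L+|\alpha|)}{\alpha!\Gamma(L)}\bigr)^{1/2}$ and simplifying with Stirling's formula, all factors of order $e^{|\alpha|}$ and $(1-r^2)^{-L/2}$ cancel and one obtains $|a_\alpha|<e^{-\delta L/2}P(L)$ with $P$ a polynomial depending only on $n$ (and, through $m^\ast$, on $r$). Thus for $L\ge L_0(n,r,\delta)$ the event forces $|a_\alpha|<e^{-\delta L/4}$ simultaneously for all $\simeq_{n,r}L^n$ indices with $|\alpha|\le m^\ast$; since these coefficients are i.i.d.\ and $\mathbb P[|a_\alpha|<t]=1-e^{-t^2}\le t^2$,
\[
 \mathbb P\bigl[\max_{E(0,r)}\log|\hat f_L|^2<-\delta L\bigr]\ \le\ \prod_{|\alpha|\le m^\ast}e^{-\delta L/2}\ =\ e^{-\tfrac{\delta L}{2}\,\#\{|\alpha|\le m^\ast\}}\ \le\ e^{-cL^{n+1}} .
\]
Finally, since $e^{-ce^{L\delta/2}}\le e^{-cL^{n+1}}$ for large $L$, the last displayed bound of the lemma follows from (a) and (b).

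\emph{Main obstacle.} The delicate point is (a): converting the single geometric hypothesis into the joint exponential smallness of polynomially many Gaussian coefficients. This requires the $\alpha$-dependent choice of Cauchy radii so that $|a_\alpha|<e^{-\delta L/2}P(L)$ holds uniformly over the whole range $|\alpha|\lesssim L$; a fixed radius only controls the $\simeq L^{n-1/2}$ indices within $O(\sqrt L)$ of the ``peak'' $|\alpha|\simeq\tfrac{r^2L}{1-r^2}$, which would give only the weaker bound $e^{-cL^{n+1/2}}$. (Part (b) is comparatively soft, with much room to spare in the double-exponential.)
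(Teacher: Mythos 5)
Your proposal is correct but takes a genuinely different route from the paper in both parts. For (a), the paper reduces to Lemma~\ref{maxlogfL} and uses a \emph{fixed} Cauchy radius $|z|=r$, working level by level: under the event it shows $\sum_{|\alpha|=m}|a_\alpha|^2\lesssim(1+\epsilon)^{-m}$ for all degrees $m$ in a window $A$ of length $\simeq L$ around the peak, then uses the lower tail of the Gamma$(N(n,m))$ distribution (chi-square with $\simeq L^{n-1}$ complex degrees of freedom) to get probability $e^{-cL^{n}}$ per level, hence $e^{-cL^{n+1}}$ over $\#A\simeq L$ levels. You instead tune the polycircle radii $\rho_k$ to each $\alpha$, which after the Stirling cancellation yields the uniform bound $|a_\alpha|\lesssim e^{-\delta L/2}L^{n/4}$ on each of the $\simeq L^n$ individual coefficients with $|\alpha|\le m^*$, each forced small with probability $\le e^{-\delta L/2}$; this trades the per-level chi-square concentration for an honest count of individual exponentially small coefficients, and both deliver $e^{-cL^{n+1}}$. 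Your closing remark that a fixed radius ``only controls $\simeq L^{n-1/2}$ indices'' is off the mark: the paper's fixed-radius argument does capture $\simeq\delta L$ levels; the real distinction is that with a fixed radius one must pass to the level sums $\sum_{|\alpha|=m}|a_\alpha|^2$ rather than to individual coefficients, which is exactly what the paper does. For (b), the paper works directly on $E(0,r)$ by splitting $f_L$ into degrees $\le C\delta L$ and $>C\delta L$, showing the tail is $O(1)$ off a negligible event and reducing the head to $\sum_{|\alpha|\le C\delta L}|a_\alpha|^2\gtrsim e^{\delta L}$; your covering-plus-sub-Gaussian argument is a clean alternative (and your weighted Cauchy--Schwarz in fact gives $\E S_L=O(1)$, not just $L^{O(1)}$, so there is even more slack). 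One small imprecision, inherited from the paper's phrasing: the ``combined'' bound is not literally the union of (a) and (b), since (a) controls the event $\{\max\log<-\delta L\}$ rather than $\{\min\log<-\delta L\}$; the two items are used separately in Lemma~\ref{controlmean}, so nothing is lost, but the final display should be read as a bound on $\{\max\log>\delta L\}\cup\{\max\log<-\delta L\}$.
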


\begin{proof} By the invariance of the distribution of $\hat f$, it is enough to consider the case $z_0=0$.

(a) Consider the event
 \[
  \mathcal E_1=\left\{ \max_{|z|\leq r}\log|\hat f_L(z)|^2<-\delta L\right\}\ .
 \]
Note that
\begin{align*}
\log |\hat f_L(z)|^2=\log\frac{|f_L(z)|^2}{K_L(z,z)}& =\log|f_L(z)|^2-\log \frac{1}{(1-|z|^2)^{L}},
\end{align*}
hence, by subharmonicity,
\begin{align*}
\mathcal E_1& \subset \left\{\max_{|z|\leq r} \log |f_L(z)|^2 \leq L\log\frac{1}{1-r^2}-L \delta\right\}\\
&=\left\{ \max_{|z|=r} \log |f_L(z)|^2 \leq L\bigl(\log\frac{1}{1-r^2}- \delta\bigr)  \right\}.
\end{align*}
 
Therefore, letting $\tilde\delta=\frac{\delta}2[\log(\frac 1{1-r^2})]^{-1}$,
\[
\mathbb P[\mathcal E_1]\leq \mathbb P\left[ \max_{|z|=r}\frac{\log|f_L(z)|}{L}\leq \bigl(\frac{1}{2}-\tilde{\delta}\bigr)\log\frac{1}{1-r^2} \right]\ .
\]
The estimate of $\mathbb P[\mathcal E_1]$ will be done as soon as we prove the following lemma, which is the analogue of the upper bound in \cite{HKPV}*{Lemma 7.2.7}.

\begin{lemma}\label{maxlogfL}
For $0<\delta<1/2$ and $r\in(0,1)$ there exist $c=c(\delta,r)$ and $L_0=L_0(\delta,r)$ such that for all $L\geq L_0$
\[
\mathbb P\left[\max_{|z|=r}\frac{\log|f_L(z)|}{L}\leq \bigl(\frac{1}{2}-\delta\bigr)\log\frac{1}{1-r^2} \right]\leq e^{-cL^{n+1}}\ ·
\]
\end{lemma}

\begin{proof} [Proof of Lemma~\ref{maxlogfL}]
Under the event we want to estimate
\[
\max_{|z|=r}|f_L(z)|\leq (1-r^2)^{-L\left(\frac{1}{2}-\delta\right)}\ .
\]
We shall see that this implies that some coefficients of the series of $f_L$ are necessarily ``small", something that only happens with a probability less than $e^{-cL^{n+1}}$. Since
\[
f_L(z)=\sum_{\alpha}\frac{\partial^{\alpha}f_L(0)}{\alpha!}z^{\alpha}=\sum_{\alpha}a_{\alpha}\left(\frac{\Gamma(|\alpha|+L)}{\alpha!\Gamma(L)}\right)^{1/2}z^{\alpha},
\]
we have
\[
a_{\alpha}=\left(\frac{\alpha!\Gamma(L)}{\Gamma(L+|\alpha|)}\right)^{1/2}\frac{\partial^{\alpha}f_L(0)}{\alpha!}, 
\]
and by Cauchy's formula \cite{Ru}*{pag.37}
\[
\frac{\partial^{\alpha}f_L(0)}{\alpha!}=\frac{\Gamma(n+|\alpha|)}{\Gamma(n)\alpha!r^{|\alpha|}}\int_{S}f_L(r\xi)\overline{\xi}^{\alpha}d\sigma(\xi).
\]
Hence
\[
|a_{\alpha}|\leq \left(\frac{\alpha!\Gamma(L)}{\Gamma(L+|\alpha|)}\right)^{1/2}\frac{\Gamma(n+|\alpha|)}{\Gamma(n)\alpha!r^{|\alpha|}} \left(\max_{\xi\in S}|\xi^{\alpha}|\right) \left(\max_{|z|=r}|f_L|\right)
\]

Since for $m\in\mathbb N$,
\begin{equation}\label{maxmonomial}
 \sum_{|\alpha|=m}\frac{\alpha^{\alpha}}{\alpha!|\alpha|^{|\alpha|}}=\frac{1}{m!},
\end{equation}
we have
\begin{align*}
|a_{\alpha}|&  
 \leq \left(\frac{\Gamma(L)}{\Gamma(L+|\alpha|)}\right)^{1/2}
\frac{\Gamma(n+|\alpha|)}{\Gamma(n)}
\left(\frac{\alpha^{\alpha}}{\alpha!|\alpha|^{|\alpha|}}\right)^{1/2}
(1-r^{2})^{-L\left(\frac{1}{2}-\delta\right)} r^{-|\alpha|}.
\end{align*}

Using 
\begin{equation}\label{sumlevels}
\sum_{|\alpha|=m}\frac{\alpha^{\alpha}}{\alpha!|\alpha|^{|\alpha|}}=\frac{1}{m!},
\end{equation}
Stirling's formula and the asymptotics for the Gamma function \eqref{asymptoticsGamma}, we get (for $m\gg n$)
\begin{align*}
\sum_{|\alpha|=m}|a_{\alpha}|^2 &\leq \frac{\Gamma(L)}{\Gamma(L+m)}\frac{\Gamma^2(n+m)}{\Gamma^2(n) m!} r^{-2m}(1-r^2)^{-L(1-2\delta)}\\
&\lesssim \frac{\Gamma(L)\Gamma(n+m)}{\Gamma(L+m)} m^{n-1} r^{-2m}(1-r^2)^{-L(1-2\delta)} \\
& \lesssim \frac{L^L (m+n)^{m+n}}{(L+m)^{L+m}} m^{n-1} r^{-2m}(1-r^2)^{-L(1-2\delta)}\\
&\lesssim \frac{L^L (m+n)^{m}}{(L+m)^{L+m}} m^{2n} r^{-2m}(1-r^2)^{-L(1-2\delta)}
\end{align*}

(We use this lemma (and Lemma~\ref{controlmax}) in the proof of Lemma~\ref{controlmean}, which is in turn used in Lemma~\ref{mainlemmaSLD} with a radius $r=\epsilon$ such that $\mu(E(\lambda_j,\epsilon))=(\delta/\mu(K))^n$. Since in Lemma~\ref{mainlemmaSLD} it is enough to consider $\delta$ small, here it is enough to consider $r$ close to 0. We assume thus that $r$ is close to 0, although the proof seems to work for all $r\in (0,1)$).

For the indices $m$ such that
\begin{equation}\label{c1}
m\leq  \frac{r^2L-n}{1-r^2} 
\end{equation}
we have $(1-r^2)m\leq r^2L-n$ and therefore $\frac{m+n}{L+m} r^{-2}\leq 1$. Hence
\[
 \sum_{|\alpha|=m}|a_{\alpha}|^2 \leq \frac{L^L }{(L+m)^{L}}  \frac{m^{2n}}{(1-r^2)^{L(1-2\delta)}}=
 \left[\frac{L m^{\frac {2n}L}}{(L+m) (1-r^2)^{1-2\delta}}\right]^L\ .
\]

Fix $\epsilon$ (possibly very small) and let us find conditions on $m$ so that the term in the brackets is smaller than $(1+\epsilon)^{-1}$. Assume that $m$ satisfies \eqref{c1} and
\begin{equation}\label{c2}
 m\geq (1-\delta) \frac{r^2 L}{1-r^2} ,
\end{equation}
Then $\lim_{L\to\infty} m^{\frac {2n}L}=1$ and we can take $L_0$ such that $m^{\frac {2n}L}\leq 1+\epsilon$ for $L\geq L_0$. Then, for the term in the brackets to be smaller than $(1+\epsilon)^{-1}$ it is enough to have
\[
 \frac{ L (1+\epsilon)}{(L+m) (1-r^2)^{1-2\delta}}\leq \frac 1{1+\epsilon}\ ,
\]
that is
\[
 (1+\epsilon)^2 L\leq (L+m) (1-r^2)^{1-\delta}\ .
\]
This will occur for the $m$'s in our range if 
\[
 (1+\epsilon)^2<\bigl(1+\frac{(1-\delta)r^2}{1-r^2}\bigr) (1-r^2)^{1-\delta}\ .
\]
Thus for the existence of an $\epsilon>0$ with this property it is enough to have
\[
 1<\bigl(1+\frac{(1-\delta)r^2}{1-r^2}\bigr) (1-r^2)^{1-\delta}= (1-r^2)^{1-\delta}+\frac{(1-\delta)r^2}{(1-r^2)^{\delta}}\ .
\]
The function $f(x)=(1-x)^{1-\delta}+\frac{(1-\delta)x}{(1-x)^{\delta}}$ has $f(0)=1$ and $f'(x)=\frac{\delta(1-\delta) x}{(1-x)^{1+\delta}}>0$, thus $f(x)>1$ for $x>0$.

All combined, for the indices $m$ satisfying \eqref{c1} and \eqref{c2}, i.e. in the set
\[
A:=\bigl\{m: \ (1-\delta)\frac{r^2 L}{1-r^2}\leq m\leq \frac{r^2 L-n}{1-r^2}\bigr\}
\]
the following estimate holds
\[
 \sum_{|\alpha|=m}|a_{\alpha}|^2 \lesssim (1+\epsilon)^{-m}
\]
Let us see next that this happens with very small probability. Note that
\begin{align*}
 \mathbb P\left[\sum_{|\alpha|=m} |a_{\alpha}|^2\leq (1+\eps)^{-m},\ \forall m\in A\right]
&=\prod_{m\in A} \mathbb P\left[\sum^{N(n,m)}_{j=1}|\xi_{j}|^2\leq (1+\eps)^{-m}\right] ,
\end{align*}
where $\xi_j\sim \mathbb N_{\C}(0,1)$ are independent and $N(n,m)=\Gamma(n+m)/(m!\Gamma(n))$ is the number of indices $\alpha$ with $|\alpha|=m$. 
The variable $\sum^{N(n,m)}_{j=1}|\xi_j|^2$ follows a Gamma distribution of parameter $N(n,m)$, 
therefore,
\begin{align*}
\mathbb P\left[\sum^{N(n,m)}_{j=1}|\xi_{j}|^2\leq (1+\eps)^{-m}\right]&
=\frac{1}{\Gamma(N(n,m))}\int^{(1+\eps)^{-m}}_{0}x^{N(n,m)-1}e^{-x}dx\\
&\leq \frac{1}{\Gamma(N(n,m))}\frac{1}{N(n,m)}(1+\eps)^{-mN(n,m)}.
\end{align*}

Observe that for $m\in A$, $m \simeq L$ and, by \eqref{asymptoticsGamma}, $N(n,m)\simeq m^{n-1}\simeq L^{n-1}$. With this and Stirling's formula we get
\begin{multline*}
 \log \mathbb P\left[\sum^{N(n,m)}_{j=1}|\xi_{j}|^2\leq (1+\eps)^{-(m+n)}\right]
\lesssim -\log\Gamma(L^{n-1})-\log L^{n-1}-L\cdot L^{n-1}\log(1+\eps)\\
 \simeq -L^{n}\log(1+\eps)\left[1+o(1)\right]\leq -CL^{n}.
\end{multline*}
Therefore, changing appropiately the value $C$ at each step, we finally see that
\[
\mathbb P\left[\sum_{|\alpha|=m}|a_{\alpha}|^2\leq (1+\eps)^{-m},\ \forall m\in A\right]
\leq \left(e^{-CL^n}\right)^{\#A}=\left(e^{-CL^n}\right)^{L+o(1)}\leq e^{-CL^{n+1}}.
\]
This finishes the proof of (a) in Lemma~\ref{controlmax}.
\end{proof}

 (b) Let now 
 \[
  \mathcal E_2:=\left\{\max_{|z|\leq r}\log|\hat f_L(z)|^2>\delta L\right\}=\left\{\max_{|z|\leq r}\left[\log |f_L(z)|-\frac{L}{2}\log\frac{1}{1-|z|^2}\right]>\delta L\right\}.
 \]
We estimate the probability of this event by controlling the coefficients of the series of $f_L$.
Let $C$ be a constant to be determined later on. Split the sum defining $|f_L|$ as
\begin{align}
|f_L(z)|&\leq  \sum_{|\alpha|\leq C\delta L}|a_{\alpha}|\left(\frac{\Gamma(|\alpha|+L)}{\alpha!\Gamma(L)}\right)^{1/2}|z^{\alpha}|+\sum_{|\alpha|>C\delta L}|a_{\alpha}|\left(\frac{\Gamma(|\alpha|+L)}{\alpha!\Gamma(L)}\right)^{1/2}|z^{\alpha}|\\
&=:(I)+(II).\nonumber
\end{align}
We shall estimate each part separately. 

Let us begin with the first sum. Using Cauchy-Schwarz inequality,  \eqref{maxmonomial} and \eqref{sumlevels} we obtain
\begin{align*}
(I) & \leq \left(\sum_{|\alpha|\leq C\delta L}|a_{\alpha}|^2\right)^{1/2}
\left(\sum_{|\alpha|\leq C\delta L}\frac{\Gamma(|\alpha|+L)}{\alpha!\Gamma(L)}\frac{\alpha^{\alpha}}{|\alpha|^{|\alpha|}}|z|^{2|\alpha|}\right)^{1/2}\\
& =\left(\sum_{|\alpha|\leq C\delta L}|a_{\alpha}|^2\right)^{1/2}\left(\sum_{m\leq C\delta L}\frac{\Gamma(m+L)}{m!\Gamma(L)}|z|^{2m}\right)^{1/2}\\
& \leq \left(\sum_{|\alpha|\leq C\delta L}|a_{\alpha}|^2\right)^{1/2}(1-|z|^2)^{-L/2}=\left(\sum_{|\alpha|\leq C\delta L}|a_{\alpha}|^2\right)^{1/2}\sqrt{K_L(z,z)}.
\end{align*}

Now we shall see that, except for an event of small probability, $(II)$ is bounded (if $C$ is choosen appropiately). For $|z|\leq r$,
\begin{align*}
(II)& \leq \sum_{|\alpha|>C\delta L}|a_{\alpha}|\left(\frac{\Gamma(|\alpha|+L)}{\alpha!\Gamma(L)}\right)^{1/2}\left(\frac{\alpha^{\alpha}}{|\alpha|^{|\alpha|}}\right)^{1/2}r^{|\alpha|}
& \leq \sum_{|\alpha|>C\delta L}|a_{\alpha}|\left(\frac{\Gamma(|\alpha|+L)}{|\alpha|!\Gamma(L)}\right)^{1/2}r^{|\alpha|}
\end{align*}
Let $\beta>0$ be such that $r=e^{-\beta}$ and consider $\gamma\in (0,\beta)$ and $\eps>0$ such that $0<\gamma<\gamma+\eps<\beta$. Define the following event:
\[
A=\left\{ |a_{\alpha}|\leq e^{\gamma|\alpha|},\ \forall\alpha : |\alpha|\geq C\delta L\right\}.
\]
If $A$ occurs, by the asymptotics \eqref{asymptoticsGamma}, 
\begin{align*}
(II)&\leq \sum_{m>C\delta L} e^{\gamma m} \left(\frac{\Gamma(m+L)}{m!\Gamma(L)}\right)^{1/2}r^{m} \frac{\Gamma(m+n)}{\Gamma(n) m!}\\ 
&\lesssim \frac{1}{\sqrt{\Gamma(L)}}\sum_{m>C\delta L}m^{\frac{L-1}{2}}m^{n-1}e^{\gamma m}r^{m}
\leq \frac{1}{\sqrt{\Gamma(L)}}\sum_{m>C\delta L}m^{n+L/2} (e^{\gamma}r)^{m}\ .
\end{align*}

\begin{lemma}
 Given $\epsilon>0$ there exists $C>0$ big enough so that for all $m>C\delta L$
\[
\frac{m^{n+L/2}}{\sqrt{\Gamma(L)}}\leq C e^{\eps m}.
\]
\end{lemma}
\begin{proof}
It is enough to see that there exists a constant $D$ such that for $x>C\delta L$
\[
 f(x):=\epsilon x -(n+\frac L2)\log x+\frac 12\log  \Gamma(L)+ D\geq 0 \ .
\]
Note that $\lim_{x\to\infty}f(x)=+\infty$ and that $f$ is increasing for $x\geq\epsilon^{-1} (n+L/2)$. Choose $C$ with $C\delta L>\epsilon^{-1} (n+L/2)$, so that $f$ is increasing for $x>C\delta L$. Then, by Stirling's formula,
\begin{align*}
 f(C\delta L)&=\epsilon C\delta L-(n+\frac{L}2)\log(C\delta L)+\frac 12 \log\Gamma(L)+\log D\\
 &=\epsilon C\delta L -(n+\frac{L}2)\log(C\delta)-n\log L +\frac 12 \log(\frac{2\pi}L)^{1/2}-\frac{L}2+\textrm{O}(1)\\
 &=[\epsilon C\delta- \frac 12 \log(C\delta)-\frac 12]L + \textrm{o}(L)\ .
\end{align*}
Choose $C$ big enough so that the term in the brackets is positive, and therefore $f(x)>0$ for $x>C\delta L$.
\end{proof}

Taking $C$ as in this lemma we obtain
\[ 
(II)\lesssim \sum_{m>C\delta L}e^{-[\beta-(\gamma+\eps)]m}\leq \frac{1}{1-e^{-[\beta-(\gamma+\eps)]}}\ .
\]

Now we show that the event $A$ has ``big'' probability. The variables $|a_\alpha|^2$ are independent exponentials, hence
\[
\mathbb P[A]=\prod_{|\alpha|\geq C\delta L}1-\mathbb P[|a_{\alpha}|\geq e^{\gamma|\alpha|}]
=\prod_{m\geq C\delta L}\left[1-e^{-e^{2\gamma m}}\right]^{\frac{\Gamma(n+m)}{\Gamma(n)m!}}.
\]
Since $x=e^{-e^{2\gamma m}}$ is close to 0, we can use the estimate $\log(1-x)\simeq -x$. Thus, using \eqref{asymptoticsGamma} once more,
\[
\log \mathbb P[A]=\sum_{m\geq C\delta L}\frac{\Gamma(n+m)}{\Gamma(n)m!}\log\left[1-e^{-e^{2\gamma m}}\right]\simeq -\sum_{m\geq C\delta L}m^{n-1}e^{-e^{2\gamma m}}.
\]
There exists $L_0$ such that for all $L\geq L_0$ and $m\geq C\delta L$,
\[
 m^{n-1} e^{-e^{2\gamma m}}\leq e^{-e^{\gamma m}},
\]
and therefore
\[
\log \mathbb P[A]\geq -\sum_{m\geq C\delta L}e^{-e^{\gamma m}}\simeq -e^{-e^{\gamma C\delta L}} .
\]
Choosing $C$ big enough so that, in addition to the previous conditions, $\gamma C>\log\frac{1}{1-r^2}$  we have
\[
-e^{-e^{(2\gamma-\eta)C\delta L}} > -e^{-(1-r^2)^{-\delta L}}
\]
and therefore
\[
 \mathbb P[A]\geq e^{-e^{-(1-r^2)^{-\delta L}}}\ .
\]

So far we have proved that, after choosing $\gamma$ appropriately, and under the event $A$:
\[
|f_L(z)|\leq \left(\sum_{|\alpha|\leq C\delta L}|a_{\alpha}|^2\right)^{1/2}\sqrt{K_L(z,z)}+C_r.
\]
Therefore, the condition
\[
\frac{|f_L(z)|^2}{K_L(z,z)}>e^{\delta L}
\]
imposed in $\mathcal E_2$ implies that, for $|z|\leq r$ and $L$ big,
\[
\sum_{|\alpha|\leq C\delta L}|a_{\alpha}|^2\geq \left(e^{\frac{\delta}{2}L}-\frac{C_r}{\sqrt{K_L(z,z)}}\right)^2>\frac 12 e^{ \delta L}.
\]
Let
\[
M_L=\#\left\{\alpha : \ |\alpha|\leq C\delta L\right\}=\sum_{m\leq C\delta L}\frac{\Gamma(n+m)}{\Gamma(n)m!}\leq C\delta L\frac{\Gamma(n+C\delta L)}{\Gamma(n)(C\delta L)!}\simeq C^n\delta^nL^n.
\]
Hence,
\begin{align*}
\mathbb P[A\cap \mathcal E_2]&\leq \mathbb P\left[\bigl\{\sum_{|\alpha|\leq C\delta L} |a_{\alpha}|^2 \geq \frac 12 e^{ \delta L} \bigl\}\right]
\leq \sum_{|\alpha|\leq C\delta L} \mathbb P\left[ |a_{\alpha}|^2\geq\frac{e^{\delta L}}{2 M_L}\right]\\
&= M_L e^{-(\frac{e^{\delta L}}{2 M_L})}\leq e^{-e^{\frac{\delta}2 L}}\ .
\end{align*}

Using this last estimate and the bound for $\mathbb P[A]$, we have finally that
\[
\mathbb P[\mathcal E_2]\leq e^{-e^{L\delta/2}} .
\]
\end{proof}

It remains to prove Lemma \ref{controlmean}. Before we proceed we need the following mean-value estimate of $\log |\hat f_L(\lambda)|^2$.

\begin{lemma}\label{invlemma}
Let $\lambda\in\B_n$, $s>0$ and consider the pseudo-hyperbolic ball $E(\lambda,s)$. Then
\[
\log |\hat f_L(\lambda)|^2 \leq \frac{1}{\mu (E(\lambda,s))}\int_{E(\lambda,s)}\log |\hat f_L(\xi)|^2 d\mu(\xi)+L\eps(n,s),
\]
where
\[
\eps(n,s)=\frac{n}{\mu(E(0,s))}\int^{\frac{s^2}{1-s^2}}_{0}x^{n-1}\log(1+x)dx\leq \frac{s^2}{1-s^2}=\mu(E(\lambda,s))^{1/n}.
\]
\end{lemma}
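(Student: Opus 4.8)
The plan is to reduce to the case $\lambda=0$ by means of the isometries $T_\lambda$ of $B_L(\B_n)$ introduced above, and then to invoke the sub-mean value property of plurisubharmonic functions against the invariant measure $\mu$. So first I would set $g:=T_\lambda f_L$, i.e. $g(z)=\bigl(\tfrac{1-|\lambda|^2}{(1-\bar\lambda\cdot z)^2}\bigr)^{L/2}f_L(\phi_\lambda(z))$. The point is that $g$ is holomorphic on $\B_n$, since $1-\bar\lambda\cdot z$ is holomorphic and nonvanishing on the simply connected domain $\B_n$; hence $\log|g|^2$ is plurisubharmonic. Since $\sqrt{K_L(z,z)}=(1-|z|^2)^{-L/2}$ we have $\hat g(z)=g(z)(1-|z|^2)^{L/2}$, and using \eqref{dh} together with $|1-\bar\lambda\cdot z|=|1-\bar z\cdot\lambda|$ a short computation gives
\[
|\hat g(z)|^2=\Bigl(\tfrac{(1-|\lambda|^2)(1-|z|^2)}{|1-\bar\lambda\cdot z|^2}\Bigr)^{L}\,|f_L(\phi_\lambda(z))|^2=(1-|\phi_\lambda(z)|^2)^L\,|f_L(\phi_\lambda(z))|^2=|\hat f_L(\phi_\lambda(z))|^2 .
\]
Evaluating at $z=0$ (where $\phi_\lambda(0)=\lambda$) yields $\log|\hat f_L(\lambda)|^2=\log|\hat g(0)|^2=\log|g(0)|^2$. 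Moreover $\phi_\lambda$ is an involution with $\phi_\lambda(s\B_n)=E(\lambda,s)$ and $\mu(E(\lambda,s))=\mu(s\B_n)$, so by the $\Aut(\B_n)$-invariance of $\mu$ the substitution $\xi=\phi_\lambda(w)$ turns the average of $\log|\hat f_L|^2$ over $E(\lambda,s)$ into the average of $\log|\hat g|^2$ over the Euclidean ball $s\B_n$. Thus it is enough to prove, for an arbitrary holomorphic $g$ on $\B_n$,
\[
\log|g(0)|^2\le \frac1{\mu(s\B_n)}\int_{s\B_n}\log|\hat g(w)|^2\,d\mu(w)+L\,\eps(n,s),\qquad \eps(n,s):=\frac1{\mu(s\B_n)}\int_{s\B_n}\log\tfrac1{1-|w|^2}\,d\mu(w).
\]

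The second step is the sub-mean value estimate. Writing $\log|g(w)|^2=\log|\hat g(w)|^2+L\log\tfrac1{1-|w|^2}$ and averaging against $d\mu/\mu(s\B_n)$ over $s\B_n$ gives precisely $\tfrac1{\mu(s\B_n)}\int_{s\B_n}\log|g|^2\,d\mu=\tfrac1{\mu(s\B_n)}\int_{s\B_n}\log|\hat g|^2\,d\mu+L\,\eps(n,s)$. On the other hand, since $\log|g|^2$ is plurisubharmonic, hence subharmonic on $\B_n\subset\R^{2n}$, and $\mu$ restricted to $s\B_n$ is a finite rotation-invariant measure, integrating the spherical sub-mean value inequalities over radii $\rho\in(0,s)$ against the radial density of $\mu$ yields $\log|g(0)|^2\le\tfrac1{\mu(s\B_n)}\int_{s\B_n}\log|g|^2\,d\mu$ (trivially when $g(0)=0$). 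Combining the two displays gives the reduced inequality, so the proof of the bound of the Lemma is complete once $\eps(n,s)$ is computed.

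For that last step I would pass to polar coordinates: with $d\mu=d\nu/(1-|w|^2)^{n+1}$ and $d\nu=2n\rho^{2n-1}\,d\rho\,d\sigma$ ($\sigma$ the normalized surface measure on the unit sphere), the change of variables $x=\rho^2/(1-\rho^2)$ turns $\tfrac{2n\rho^{2n-1}}{(1-\rho^2)^{n+1}}\,d\rho$ into $nx^{n-1}\,dx$ and $\tfrac1{1-\rho^2}$ into $1+x$. This produces at once both $\mu(s\B_n)=\bigl(\tfrac{s^2}{1-s^2}\bigr)^n$ and the stated formula $\eps(n,s)=\tfrac{n}{\mu(s\B_n)}\int_0^{s^2/(1-s^2)}x^{n-1}\log(1+x)\,dx$. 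The final inequality follows from the monotonicity of $\log(1+x)$: $\eps(n,s)\le\log\bigl(1+\tfrac{s^2}{1-s^2}\bigr)\le\tfrac{s^2}{1-s^2}=\mu(E(0,s))^{1/n}$.

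I do not anticipate any serious obstacle: the argument is essentially routine. The steps that need the most care are checking that $T_\lambda f_L$ is holomorphic together with the identity $|\hat g|=|\hat f_L\circ\phi_\lambda|$ — this is what makes the reduction to $\lambda=0$ legitimate and guarantees that $\log|g|^2$ really is plurisubharmonic — and carrying out cleanly the change of variables that evaluates $\eps(n,s)$ and $\mu(s\B_n)$.
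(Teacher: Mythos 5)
Correct, and essentially the same approach as the paper's: sub-mean value inequality for the plurisubharmonic $\log|g|^2$ against the rotation-invariant measure $\mu$ on a Euclidean ball, combined with the M\"obius invariance of $\mu$ and the explicit change of variable $x=\rho^2/(1-\rho^2)$. The only cosmetic difference is that you pre-conjugate by the isometry $T_\lambda$ to reduce to the origin before integrating (using $|\hat g|=|\hat f_L\circ\phi_\lambda|$), whereas the paper applies subharmonicity directly over $E(\lambda,s)$ and then invokes the pluriharmonicity of $\log|1-\bar\lambda\cdot\xi|^2$ to carry out the equivalent change of variables inside the integral; these are two ways of packaging the same computation.
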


\begin{proof}
 By the subharmonicity of $\log|f_L(z)|^2$ we have
 \begin{align*}
 \log |\hat f_L(\lambda)|^2 &\leq \frac{1}{\mu (E(\lambda,s))}\int_{E(\lambda,s)}\log | f_L(\xi)|^2 d\mu(\xi) +L\log(1-|z|^2)\\
 &=\frac{1}{\mu (E(\lambda,s))}\int_{E(\lambda,s)}\log |\hat f_L(\xi)|^2 d\mu(\xi)+\\
 &\qquad \qquad + L\left[\log(1-|\lambda|^2) - \frac{1}{\mu (E(\lambda,s))}\int_{E(\lambda,s)}\log  (1-|\xi|^2) d\mu(\xi)\right]\ .
 \end{align*}
Identity \eqref{dh} and the pluriharmonicity of $\log|1-\bar\lambda\cdot \xi|^2$ yield
 \begin{align*}
  \frac{1}{\mu (E(\lambda,s))}\int_{E(\lambda,s)}\log  (1-|\xi|^2) d\mu(\xi)&=
  \frac{1}{\mu (B(0,s))}\int_{B(0,s)}\log  (1-|\phi_\lambda(\xi)|^2) d\mu(\xi)\\
  &= \log(1-|\lambda|^2)+\frac{1}{\mu (B(0,s))}\int_{B(0,s)}\log (1-|\xi|^2) d\mu(\xi).
 \end{align*}
Changing into polar coordinates and performing the change of variable $x=\frac{r^2}{1-r^2}$ we get
\[
 \int_{B(0,s)}\log (1-|\xi|^2) d\mu(\xi)=2n\int_0^{s}\log(1-r^2) \frac{r^{2n-1}}{(1-r^2)^{n+1}}\; dr=-n\int_0^{\frac{s^2}{1-s^2}} x^{n-1} \log(1+x)\; dx.
\]
This and the fact that $\mu(B(0,s))=\frac{s^{2n}}{(1-s^2)^n}$ (\cite{Stoll} (4.4)) finish the proof.
\end{proof}

\begin{proof}[Proof of Lemma \ref{controlmean}]
According to Lemma~\ref{controlmax}(a), except for an exceptional event of probability $e^{-cL^{n+1}}$, there is $\lambda\in E:=E(z_0,s)$ such that
\[
-L(\mu(E))^{1/n}<\log  |\hat f_L(\lambda)|^2.
\]
Therefore, using Lemma \ref{invlemma}, 
\[
-L(\mu(E))^{1/n}<\frac{1}{\mu(E)}\int_{E}\log  |\hat f_L(\xi)|^2 d\mu(\xi)+L(\mu(E))^{1/n}.
\]
Hence
\[
0 < \frac{1}{\mu(E)}\int_{E}\log |\hat f_L(\xi)|^2 d\mu(\xi) +2L(\mu(E))^{1/n}.
\]
Separating the positive and negative parts of the logarithm we obtain:
\[
\frac{1}{\mu(E)}\int_{E}\log^{-}  |\hat f_L(\xi)|^2 d\mu(\xi)
\leq \frac{1}{\mu(E)}\int_{E}\log^{+} |\hat f_L(\xi)|^2 d\mu(\xi)+2 L(\mu(E))^{1/n}.
\]
Hence,
\[
\frac{1}{\mu(E)}\int_{E}\left|\log  |\hat f_L(\xi)|^2 \right|d\mu(\xi)\leq \frac{2}{\mu(E)}\int_{E}\log^{+}  |\hat f_L(\xi)|^2 d\mu(\xi)+2 L(\mu(E))^{1/n}.
\]
Again by Lemma \ref{controlmax}, outside another exceptional event of probability $e^{-cL^{n+1}}$,
\[
\frac{1}{\mu(E)}\int_{E}\left|\log  |\hat f_L(\xi)|^2 \right|d\mu(\xi)\leq
2\max_{E}\log^{+}  |\hat f_L(\xi)|^2 +2 L(\mu(E))^{1/n}
\leq 5 L\mu(E)^{1/n}.
\]
\end{proof}

\section{The hole theorem}

Here we prove Theorem~\ref{holethm}.

The upper bound is a direct consequence of the results in the previous section. Letting $U=B(0,r)$ and applying Corollary \ref{conseqSLD} with $\delta\mu(U)$ instead of $\delta$ we get 
\[
 \mathbb P\left[Z_{f_L}\cap B(0,r)=\emptyset\right]\leq\mathbb P\left[|I_L(U)-L\mu(U)|>\delta L\mu(U)\right]\leq e^{-C_{2}L^{n+1}}.
\]

The method to prove the lower bound is by now standard (see for example \cite{HKPV}*{Theorem 7.2.3} and \cite{ST1}): we shall choose three events forcing $f_L$ to have a hole $B(0,r)$ and then we shall see that the probability of such events is at least $e^{-C_{1}L^{n+1}}$. Our starting point is the estimate
\[
 |f_L(z)|\geq |a_0|-\left|\sum_{0<|\alpha|\leq CL} a_\alpha\left(\frac{\Gamma(L+|\alpha|)}{\alpha! \Gamma(L)}\right)^{1/2} z^\alpha\right|-
 \left|\sum_{|\alpha|> CL} a_\alpha\left(\frac{\Gamma(L+|\alpha|)}{\alpha! \Gamma(L)}\right)^{1/2} z^\alpha\right|\ ,
\]
where $C$ will be choosen later on.

The first event is
\[
 E_1:=\left\{\ |a_0|\geq 1\right\}\ ,
\]
which has probability
\[
\mathbb P[E_1]=\mathbb P[|a_0|^2\geq 1]=e^{-1}.
\]
The second event corresponds to the tail of the power series of $f_L$. Let
\[
E_2:=\left\{\ |a_\alpha|\leq \sqrt{\frac{\alpha!\Gamma(n)}{\Gamma(n+|\alpha|)}}|\alpha|^n,\quad 
\forall \alpha: \ |\alpha|>CL\right\}.
\]

We shall see next that $\mathbb P[E_2]$ is big, and that under the event $E_2$ the tail of the power series of $f_L$ is small.

Using \eqref{maxmonomial} we have:
\begin{align*}
\left|\sum_{|\alpha|>CL}a_{\alpha}\left(\frac{\Gamma(L+|\alpha|)}{\alpha!\Gamma(L)}\right)^{1/2}z^{\alpha}\right|&\leq 
\sum_{|\alpha|>CL}|a_{\alpha}|\left[\frac{\Gamma(L+|\alpha|)}{\Gamma(L)\alpha!}
\frac{\alpha^{\alpha}}{|\alpha|^{|\alpha|}}r^{2|\alpha|}\right]^{1/2}\\
&\leq \sum_{m>CL}\left[\frac{\Gamma(L+m)}{\Gamma(L)}r^{2m}\right]^{1/2}
\sum_{|\alpha|=m}|a_\alpha|\left(\frac{\alpha^{\alpha}}{\alpha!|\alpha|^{|\alpha|}}\right)^{1/2} .
\end{align*}
Thus, using Cauchy-Schwarz inequality and \eqref{sumlevels}:
\begin{align*}
\left|\sum_{|\alpha|>CL}a_{\alpha}\left(\frac{\Gamma(L+|\alpha|)}{\alpha!\Gamma(L)}\right)^{1/2}z^{\alpha}\right| & \leq 
 \sum_{m>CL}\left[\frac{\Gamma(L+m)}{\Gamma(L)m!}r^{2m}\right]^{1/2}
\left(\sum_{|\alpha|=m}|a_{\alpha}|^2\right)^{1/2} .
\end{align*}
Using the asymptotics of the Gamma function \eqref{asymptoticsGamma}, we estimate
\[
\frac{\Gamma(m+L)}{\Gamma(L) m!}\simeq \frac{m^{L-1}}{\Gamma(L)}\leq 
\left[\frac{m^{L/m}}{\Gamma(L)^{1/m}}\right]^{m}.
\]
Note that the function $g(x):=\left(x^{L}/\Gamma(L)\right)^{1/x}$ is decreasing for $x\geq L$. 
Thus if $m>CL$ Stirling's formula yields
\[
\frac{m^{L/m}}{\Gamma(L)^{1/m}}\leq \frac{(CL)^{1/C}}{\Gamma(L)^{1/(CL)}}=\frac{C^{1/C} L^{1/(2CL)} e^{1/C}}{(2\pi)^{1/(2CL)}} [1+\textrm{o}(1)]
\leq (eC)^{\frac 1C} K^{\frac 1{2C}},
\]
where $K=\max\limits_{x>0} x^{1/x}=e^{-1/e}$.

Let $h(C)=(eC)^{\frac 1C} K^{\frac 1{2C}}$ and note that $h(C)>1$ and $\lim\limits_{C\to \infty} h(C)=1$. Hence, there exists $C$ big enough so that 
$h(C)r^2\leq (1-\delta)^2$ and therefore,
\begin{align*}
\left|\sum_{|\alpha|>CL}a_{\alpha}\left(\frac{\Gamma(L+|\alpha|)}{\alpha!\Gamma(L)}\right)^{1/2}z^{\alpha}\right| &\leq 
\sum_{m>CL}\left[h(C)r^2\right]^{m/2}\left(\sum_{|\alpha|=m}|a_{\alpha}|^2\right)^{1/2}\\
& \leq \sum_{m>CL}(1-\delta)^m\left(\sum_{|\alpha|=m}|a_{\alpha}|^2\right)^{1/2}.
\end{align*}
Under the event $E_2$,
\[
\sum_{|\alpha|=m}|a_{\alpha}|^2\leq 
\sum_{|\alpha|=m}\frac{|\alpha|!\Gamma(n)}{\Gamma(n+|\alpha|)}|\alpha|^{2n}=m^{2n},
\]
hence the tail of $f_L$ is controlled by the tail of a convergent series and there exists $C$ big enough so that:
\[
\left|\sum_{|\alpha|>CL}a_{\alpha}\left(\frac{\Gamma(L+|\alpha|)}{\alpha!\Gamma(L)}\right)^{1/2}z^{\alpha}\right|\leq \sum_{m>CL}(1-\delta)^m m^{2n}<\frac 14.
\]

Now we prove that the probability of $E_2$ is big. Since the variables $a_{\alpha}$ are 
independent we have, again by \eqref{asymptoticsGamma}:
\begin{align*}
\mathbb P[E_2^c] & \leq 
\sum_{|\alpha|>CL}\mathbb P\left[|a_{\alpha}|^2>\frac{|\alpha|!\Gamma(n)}{\Gamma(n+|\alpha|)}|\alpha|^{2n}\right]
=\sum_{m>CL}\mathbb P\left[|\xi|^2>\frac{m!\Gamma(n)}{\Gamma(n+m)}m^{2n}\right]\frac{\Gamma(n+m)}{\Gamma(n)m!}\\
& \lesssim \sum_{m>CL}\mathbb P\left[|\xi|^2>c_nm^{n+1}\right]m^{n-1}=\sum_{m>CL}e^{-c_nm^{n+1}}m^{n-1}.
\end{align*}
Thus for $L$ big enough, $\mathbb P[E_2^c]\leq 1/2$, and $\mathbb P[E_2]\geq 1/2$.

The third event takes care of the middle terms in the power series of $f_L$. Let
\[
E_3:=\left\{\ |a_\alpha|^2<\frac{1}{16CL}\frac{|\alpha|!\Gamma(n)}{\Gamma(n+|\alpha|)}(1-r^2)^{L}\quad \forall \alpha:\ 0<|\alpha|\leq CL\right\}.
\]
Using Cauchy-Schwarz's inequality, \eqref{maxmonomial} and\eqref{sumlevels} we get, as in previous computations: 
\begin{align*}
& \left|\sum_{0<|\alpha|\leq CL}a_{\alpha}\left(\frac{\Gamma(L+|\alpha|)}{\alpha!\Gamma(L)}\right)^{1/2}z^{\alpha}\right|\leq 
\left(\sum_{0<|\alpha|\leq CL}|a_{\alpha}|^2\right)^{1/2}
\left(\sum_{0<|\alpha|\leq CL}\frac{\Gamma(|\alpha|+L)}{\Gamma(L)\alpha!}\frac{\alpha^{\alpha}}{|\alpha|^{|\alpha|}}r^{2|\alpha|}\right)^{1/2}\\
& \leq \left(\sum_{0<|\alpha|\leq CL}|a_{\alpha}|^2\right)^{1/2}\left(\sum_{0<m\leq CL}\frac{\Gamma(m+L)}{\Gamma(L)m!}r^{2m}\right)^{1/2}
\leq \left(\sum_{0<|\alpha|\leq CL}|a_{\alpha}|^2\right)^{1/2}(1-r^2)^{-L/2}.
\end{align*}
Under the event $E_3$,
\[
\sum_{0<|\alpha|\leq CL}|a_{\alpha}|^2\leq \sum_{0<m\leq CL}\frac{1}{16CL}(1-r^2)^{L}
=\frac{1}{16}(1-r^2)^{L},
\]
and therefore
\[
\left|\sum_{0<|\alpha|\leq CL}a_{\alpha}\left(\frac{\Gamma(L+|\alpha|)}{\alpha!\Gamma(L)}\right)^{1/2}z^{\alpha}\right|\leq \frac{1}{4}.
\]
On the other hand,
\[
\mathbb P[E_3]=\prod_{0<m\leq CL}\left[1-e^{-\frac{1}{16CL}\frac{m!\Gamma(n)}{\Gamma(m+n)}(1-r^2)^{L}}\right]^{\frac{\Gamma(n+m)}{m!\Gamma(n)}}
\]
Note that if $L$ is big enough then the term appearing in the exponential is small. Since $1-e^{-x}\geq x/2$ for $x\in(0,1/2)$, we get
\begin{align*}
\mathbb P[E_3] &\geq \prod_{0<m\leq CL}\left[\frac{1}{32CL}\frac{m!\Gamma(n)}{\Gamma(m+n)}(1-r^2)^{L}\right]^{\frac{\Gamma(n+m)}{m!\Gamma(n)}}\\
& =\left[\frac{\Gamma(n)}{32CL}(1-r^2)^{L}\right]^{\sum\limits_{0<m\leq CL}\frac{\Gamma(n+m)}{m!\Gamma(n)}}\prod_{0<m\leq CL}\left(\frac{m!}{\Gamma(m+n)}\right)^{\frac{\Gamma(n+m)}{m!\Gamma(n)}}.
\end{align*}
Now we estimate each term of the product and the sum by the ``worst" term. Denote $M=[CL]$. The exponent in the first factor is controlled by
\begin{align*}
\sum^{M}_{m=1}\frac{\Gamma(n+m)}{m!\Gamma(n)}\leq M\frac{\Gamma(n+M)}{M!\Gamma(n)}=\frac{\Gamma(n+M)}{\Gamma(M)\Gamma(n)}
\leq M^{n}\leq (CL)^n .
\end{align*}
Similarly, for the second factor we have
\begin{align*}
\prod^{M}_{m=1}\left(\frac{m!}{\Gamma(m+n)}\right)^{\frac{\Gamma(n+m)}{m!\Gamma(n)}}
&\geq \left(\frac{M!}{\Gamma(M+n)}\right)^{M\frac{\Gamma(n+M)}{M!\Gamma(n)}}
\geq  \left(\frac{M!}{\Gamma(M+n)}\right)^{\frac{\Gamma(n+M)}{\Gamma(M)}}\\
&\geq  \left(\frac{\Gamma(CL+1)}{\Gamma(CL+n)}\right)^{\frac{\Gamma(n+CL)}{\Gamma(CL)}} .
\end{align*}
Then, using again \eqref{asymptoticsGamma},
\begin{align*}
\log\mathbb P[E_3]& \geq (CL)^n\log\left[\frac{\Gamma(n)}{32CL}(1-r^2)^{L}\right]+
\frac{\Gamma(n+CL)}{\Gamma(CL)}\log\left[\frac{\Gamma(CL+1)}{\Gamma(CL+n)}\right]\\
& \succsim (CL)^n \log\left[\frac{\Gamma(n)}{32CL}(1-r^2)^L\right]+
(CL)^n\log(CL)^{1-n}\\
&=C^nL^n \left[\log\frac{\Gamma(n)}{32C^n}-n \log L-L\log\frac{1}{1-r^2}\right]\\
& =-C^nL^{n+1}\log\frac{1}{1-r^2}\left[1+\frac{n\log L}{L\log\frac{1}{1-r^2}}-\frac{\log\frac{\Gamma(n)}{32 C^n}}{L\log\frac{1}{1-r^2}}\right]\\
& =-C^nL^{n+1}\log\frac{1}{1-r^2}\left[1+o(1)\right].
\end{align*}

Finally, 
\[
\mathbb P[E_2\cap E_3\cap\mathcal C]\geq e^{-C(n)\log\left(\frac{1}{1-r^2}\right)L^{n+1}\left[1+o(1)\right]},
\]
and under this event $|f_L(z)|\geq 1-1/4-1/4 >0$.

\begin{bibdiv}

\begin{biblist}

\bib{Bu-T13}{book}{
   author={Buckley, Jeremiah},
   title={Random sero sets of analytic functions and traces of functions in Fock spaces},
   series={Ph.D. Thesis},
   volume={},
   note={},
   publisher={Universitat de Barcelona},
   place={Barcelona},
   date={2013},
   pages={119},
   isbn={},
   review={},
}

\bib{Bu13}{article}{
   author={Buckley, Jeremiah},
   title={Fluctuations in the zero set of the hyperbolic Gaussian analytic function},
   journal={Int. Math. Res. Not. IMRN to appear},
   volume={},
   date={2013},
   number={},
   pages={18},
   doi={},
}

\bib{DMS}{article}{
   author={Dinh, Tien-Cuong},
   author={Marinescu, George},
   author={Schmidt, Viktoria},
   title={Equidistribution of zeros of holomorphic sections in the non compact setting},
   journal={J. Stat. Phys.},
   volume={148},
   date={2012},
   number={1},
   pages={113--136},
   doi={10.1007/s10955-012-0526-6},
}

\bib{HKPV}{book}{
   author={Hough, John Ben},
   author={Krishnapur, Manjunath},
   author={Peres, Yuval},
   author={Vir{\'a}g, B{\'a}lint},
   title={Zeros of Gaussian analytic functions and determinantal point
   processes},
   series={University Lecture Series},
   volume={51},
   publisher={American Mathematical Society},
   place={Providence, RI},
   date={2009},
   pages={x+154},
   isbn={978-0-8218-4373-4},
   review={\MR{2552864 (2011f:60090)}},
}

\bib{NS}{article}{
   author={Nazarov, Fedor},
   author={Sodin, Mikhail},
   title={Fluctuations in random complex zeroes: asymptotic normality
   revisited},
   journal={Int. Math. Res. Not. IMRN},
   date={2011},
   number={24},
   pages={5720--5759},
   issn={1073-7928},
   review={\MR{2863379 (2012k:60103)}},
}

\bib{Ru}{book}{
   author={Rudin, Walter},
   title={Function theory in the unit ball of $\mathbb C^n$},
   series={Classics in Mathematics},
   note={Reprint of the 1980 edition},
   publisher={Springer-Verlag},
   place={Berlin},
   date={2008},
   pages={xiv+436},
   isbn={978-3-540-68272-1},
   review={\MR{2446682 (2009g:32001)}},
}

\bib{SZ99}{article}{
   author={Shiffman, Bernard},
   author={Zelditch, Steve},
   title={Distribution of zeros of random and quantum chaotic sections of
   positive line bundles},
   journal={Comm. Math. Phys.},
   volume={200},
   date={1999},
   number={3},
   pages={661--683},
   issn={0010-3616},
   review={\MR{1675133 (2001j:32018)}},
   doi={10.1007/s002200050544},
}

\bib{SZ08}{article}{
   author={Shiffman, Bernard},
   author={Zelditch, Steve},
   title={Number variance of random zeros on complex manifolds},
   journal={Geom. Funct. Anal.},
   volume={18},
   date={2008},
   number={4},
   pages={1422--1475},
   issn={1016-443X},
   review={\MR{2465693 (2009k:32019)}},
   doi={10.1007/s00039-008-0686-3},
}

\bib{SZ10}{article}{
   author={Shiffman, Bernard},
   author={Zelditch, Steve},
   title={Number variance of random zeros on complex manifolds, II: smooth
   statistics},
   journal={Pure Appl. Math. Q.},
   volume={6},
   date={2010},
   number={4, Special Issue: In honor of Joseph J. Kohn.},
   pages={1145--1167},
   issn={1558-8599},
   review={\MR{2742043 (2011m:32030)}},
   doi={10.4310/PAMQ.2010.v6.n4.a10},
}

\bib{SZZ}{article}{
   author={Shiffman, Bernard},
   author={Zelditch, Steve},
   author={Zrebiec, Scott},
   title={Overcrowding and hole probabilities for random zeros on complex
   manifolds},
   journal={Indiana Univ. Math. J.},
   volume={57},
   date={2008},
   number={5},
   pages={1977--1997},
   issn={0022-2518},
   review={\MR{2463959 (2010b:32027)}},
   doi={10.1512/iumj.2008.57.3700},
}

\bib{Sod}{article}{
   author={Sodin, Mikhail},
   title={Zeros of Gaussian analytic functions},
   journal={Math. Res. Lett.},
   volume={7},
   date={2000},
   number={4},
   pages={371--381},
   issn={1073-2780},
   review={\MR{1783614 (2002d:32030)}},
}

\bib{ST1}{article}{
   author={Sodin, Mikhail},
   author={Tsirelson, Boris},
   title={Random complex zeroes. I. Asymptotic normality},
   journal={Israel J. Math.},
   volume={144},
   date={2004},
   pages={125--149},
   issn={0021-2172},
   review={\MR{2121537 (2005k:60079)}},
   doi={10.1007/BF02984409},
}

\bib{ST3}{article}{
   author={Sodin, Mikhail},
   author={Tsirelson, Boris},
   title={Random complex zeroes. III. Decay of the hole probability},
   journal={Israel J. Math.},
   volume={147},
   date={2005},
   pages={371--379},
   issn={0021-2172},
   review={\MR{2166369 (2007a:60028)}},
   doi={10.1007/BF02785373},
}

\bib{Stoll}{book}{
   author={Stoll, Manfred},
   title={Invariant potential theory in the unit ball of ${\bf C}^n$},
   series={London Mathematical Society Lecture Note Series},
   volume={199},
   publisher={Cambridge University Press},
   place={Cambridge},
   date={1994},
   pages={x+173},
   isbn={0-521-46830-2},
   review={\MR{1297545 (96f:31011)}},
   doi={10.1017/CBO9780511526183},
}

\bib{Zr}{article}{
   author={Zrebiec, Scott},
   title={The zeros of flat Gaussian random holomorphic functions on $\mathbb
   C^n$, and hole probability},
   journal={Michigan Math. J.},
   volume={55},
   date={2007},
   number={2},
   pages={269--284},
   issn={0026-2285},
   review={\MR{2369936 (2009e:60118)}},
   doi={10.1307/mmj/1187646994},
}

\end{biblist}
\end{bibdiv}

\end{document}